\definecolor{asparagus}{rgb}{0.0, 0.5, 0.0}
\newcommand\xlabel[2][]{\phantomsection\def\@currentlabelname{#1}\label{#2}}
\theoremstyle{plain}
\newtheorem{thm}{Theorem}[section]
\newtheorem{lem}[thm]{Lemma}
\newtheorem{cor}[thm]{Corollary}
\theoremstyle{definition}
\newtheorem{defn}{Definition}[section]
\newtheorem{rem}{Remark}[section]
\newtheorem{opqn}{Open Question}
\DeclareMathOperator{\1}{id}
\DeclareMathOperator{\e}{e}
\DeclareMathOperator{\T}{\mathbf{Term}}
\DeclareMathOperator{\Tal}{\mathbf{Tally}}
\DeclareMathOperator{\altch}{Ch}
\DeclareMathOperator{\trivial}{triv}
\DeclareMathOperator{\Tr}{trace}
\DeclareMathOperator{\Ustd}{UStd(\lambda)}
\DeclareMathOperator{\MT}{\mathcal{S}^{MT}}
\DeclareMathOperator{\ET}{\mathcal{S}^{ET}}
\DeclareMathOperator{\Ustdc}{UStd(\lambda^{\prime})}
\DeclareMathOperator{\std}{Std(\lambda)}
\DeclareMathOperator{\Par}{Par(n)}
\DeclareMathOperator{\Cpar}{CPar(n)}
\DeclareMathOperator{\Ncpar}{NCPar(n)}
\title{}
\begin{document}
	\title[Limit profile for the TT$2$R shuffle]{Limit profile for the transpose top-$2$ with random shuffle}
	\author{Subhajit Ghosh}
	\address[Subhajit Ghosh]{Department of Mathematics, Indian Institute of Technology Madras, Chennai 600 036}
	\email{gsubhajit@alum.iisc.ac.in}
	\author{Nishu Kumari}
	\address[Nishu Kumari]{Department of Mathematics, Institute of Mathematical Sciences, Chennai 600113}
	\email{nishukumari@alum.iisc.ac.in}
	\keywords{Random walk, Alternating group, Limit profile, Alternating group graph, Cutoff phenomenon}
	\makeatletter
	\@namedef{subjclassname@2020}{%
		\textup{2020} Mathematics Subject Classification}
	\makeatother
	\subjclass[2020]{60J10, 60B15, 43A30, 60C05, 20C15}
	\begin{abstract}
		The transpose top-$2$ with random shuffle (J. Theoret. Probab., 2020) is a lazy random walk on the alternating group $A_n$ generated by $3$-cycles of the form $(\star,n-1,n)$ and $(\star,n,n-1)$. We obtain the limit profile of this random walk by comparing it with the random walk on $A_n$ generated by all $3$-cycles. Our method employs a non-commutative Fourier analysis analogue of the comparison method introduced by Nestoridi (Electron. J. Probab., 2024). We also give the complete spectrum of alternating group graph, thus answering a question of Huang and Huang (J. Algebraic Combin., 2019).
	\end{abstract}
	\maketitle
	\section{Introduction}\label{sec:intro}
    The transpose top-$2$ with random shuffle, introduced by the first named author in 2020 \cite{TT2R}, is a lazy simple random walk on the Cayley graph $\widetilde{AG}_n$ of the alternating group $A_n$ with the generating set $\{(i,n-1,n),(i,n,n-1):1\leq i\leq n-2\}$. We may assume $\widetilde{AG}_n$ as the \emph{alternating group graph} $AG_n$; because, they are isomorphic (one is obtained from the other by relabelling the vertices). Formally, $AG_n$ is the Cayley graph of the alternating group $A_n$ with the generating set $\{(1,i,2),(1,2,i):3\leq i\leq n\}$. Jwo et al. introduced the alternating group graph in 1993 \cite{TT2R_Cayley_graph0}, afterward the alternating group graph caught considerable attention in computer science and mathematics \cite{AGG_4,AGG_1,AGG_2,AGG_0,AGG_3,AGG_5}. In general, Cayley graphs provide a very natural and a rich framework for the design and analysis of interconnection networks for parallel computers \cite{CG-appl}. 
    
	As a random walk on the alternating group $A_n$, the transpose top-$2$ with random shuffle is driven by the following probability measure (defined on $A_n$):
	\begin{equation}\label{eq:TT2R_defn}
		P(\pi)=
		\begin{cases}
			\frac{1}{2n-3}&\text{ if }\pi\in \{(i,n-1,n),(i,n,n-1):1\leq i\leq n-2\},\\
			\frac{1}{2n-3}&\text{ if }\pi=\1,\text{ the identity permutation},\\
			0&\text{ otherwise}.
		\end{cases}
	\end{equation}
    The name `transpose top-$2$ with random shuffle' was given based on the shuffling algorithm it represents. The random walk model is a lazy variant of the following process: \emph{First, the top two cards are transposed. Then, one of these two cards is selected with equal probability, and it is swapped with a randomly chosen card from the remaining $n-2$ cards.} The transpose top-$2$ with random shuffle is irreducible and aperiodic \cite[Proposition 1.2]{TT2R}; thus, the distribution after $k$ transitions converges to the stationary distribution as $k$ goes to infinity. The stationary distribution in this case is the uniform distribution $U_{A_n}$ defined on the alternating group $A_n$ (because this is a simple random walk on a finite connected regular graph). The first named author has shown that the transpose top-$2$ with random shuffle exhibits \emph{total variation cutoff phenomenon} at time $n\log n$ with a \emph{cutoff window} of order $n$ \cite[Theorem 1.1, Theorem 4.7]{TT2R}. However, there is no issue in considering $n\log n+o(n\log n)$ as a cutoff time, thanks to the definition of the cutoff phenomenon. Throughout this article, we take the cutoff time for the transpose top-$2$ with random shuffle to be $\left(n-\frac{3}{2}\right)\log n$ for simplicity in calculations.

    We now outline an intuitive argument to show that $\left(n-\frac{3}{2}\right)\log n+O(n)$ steps are expected. To illustrate this, consider the following algorithmic process for marking the cards:
    \begin{enumerate}[left=0em, itemsep=0em]
        \item Start by marking one card in the deck.
        \item \label{S-I} Select an element from $\{(i,n,n-1),(i,n-1,n),\1:1\leq i\leq n-2\}$
        uniformly at random.
        \item \label{S-II} If the element selected in \hyperref[S-I]{(2)} is
        \begin{itemize}
            \item $\1$, then mark the $n$th card, and keep the deck unaltered.
            \item $(i,n-1,n)$, then mark the $(n-1)$th card and then perform the following: First, transpose the $n$th and $(n-1)$th cards, and then swap the $i$th and $n$th cards.
            \item $(i,n,n-1)$, then mark the $n$th card and then perform the following: Transpose the $n$th and $(n-1)$th cards, and then swap the $i$th and $(n-1)$th cards.
        \end{itemize}
        \item \label{S-III} If all the cards are marked, stop. Else, return to \hyperref[S-I]{(2)} and continue.
    \end{enumerate}
    We observe that the relative order of the marked cards is close to a random even permutation once the cards at positions  $n$ and $(n-1)$ are marked, given knowledge of which cards are marked and their positions. This can be shown by induction on the number of marked cards in $\{1, \dots, n-2\}$, utilizing the fact that the deck is permuted only by 3-cycles, which are even permutations. Consequently, once all cards are marked, the entire deck undergoes a random even permutation.
    
    Additionally, the probability that a card at position $i\in\{1,\dots,n-2\}$ is marked is $\frac{2}{2n-3}$. Once a card at position $i\in\{1,\dots,n-2\}$ is marked, it remains marked forever. After all the other cards have been marked, it takes at most $O(n)$ additional steps to mark the $n$th and $(n-1)$th cards. Thus, a coupon collector-type argument ensures that all the cards will be marked after approximately $\left(n - \frac{3}{2}\right) \log n + O(n)$ steps.
    
    In this article, we obtain the \emph{limit profile} for the transpose top-$2$ with random shuffle.
	
	The cutoff phenomenon of a finite (irreducible and aperiodic) Markov chain ensures the convergence to stationary distribution occurs suddenly over a very short time, known as the cutoff window. In real-world applications, knowing that a random process exhibits a cutoff can save time and the running costs of associated algorithms. The first mathematical demonstration of the cutoff phenomenon was due to Diaconis and Shahshahani in 1981, who proved it to hold for the random transposition model \cite{DS}. During the following years, Diaconis, Aldous, and their collaborators developed the theory in a long series of papers. Now, it has become one of the vast subfields in probability literature. For a survey on this topic, we refer the reader to the references \cite{AD1,AD2,D1,D2,LPW,S}.
	
	A relatively new direction is that of the limit profile, a function that precisely describes the sharp transition at the cutoff window. The limit profile is known only for a handful number of Markov chains, viz. the random walk on the hypercube \cite{diaconis1990asymptotic} (a short argument was recently provided in \cite[Theorem 5.1]{N-limit}), the riffle shuffle \cite{Riff-sh}, the asymmetric exclusion process on the segment \cite{ASEP-limit}, the simple exclusion process on the cycle \cite{SEP-limit}, the Bernoulli–Laplace process \cite{olesker2024limit}, the projections of random walks on groups \cite{Neato-Oles_Tay}, random walks on the abelian groups \cite{hermon2021cutoff}, the simple random walk on Ramanujan graphs \cite{Ramanujan_graph-limit}, and a few random walks (random transposition \cite{T-limit} and star transposition shuffles \cite{N-star_limit}) on the symmetric group. More recently, Delhaye obtained a profile result for the quantum unitary group \cite{delhaye2024brownian}. Teyssier studied the limit profile for the classical random transposition model \cite{T-limit} (recently, Jain and Sawhney provided an alternative proof of Teyssier's result in \cite{jain2024hitting}). It has a connection with a magnificent phenomenon in the theory of mixing times, which informally says, ``occasionally, certain aspects of a system mix much faster than the system as a whole" \cite{Comp-mix1,Comp-mix2}, and supports a conjecture of Nathana\"{e}l Berestycki \cite[Conjecture 1.2]{T-limit}. Afterward, Nestoridi et al. developed some methods to obtain the limit profile further to reversible Markov chains and applied it to some models \cite{N-star_limit,N-limit,Neato-Oles_Tay}. In this article, we provide a Fourier analysis analogue of Nestoridi's comparison method \cite{N-star_limit}. Our technique compares random walks on a finite group. Importantly, our result does not assume simultaneous diagonalizability of the transition matrices; the comparison result is presented in \Cref{thm:comparison}. The formal definitions of the cutoff phenomenon and the limit profile will be given in \Cref{RWFG}.
    
	Now, we recall the definition of the \emph{total variation distance} between probability measures on a finite set. Let $\mathcal{P}$ and $\mathcal{Q}$ be two probability measures on a finite set $\Omega$. Then the \emph{total variation} distance between $\mathcal{P}$ and $\mathcal{Q}$, denoted $\|\mathcal{P}-\mathcal{Q}\|_{\text{TV}}$, is defined by
	\begin{equation}\label{eq:TV-def}
		\|\mathcal{P}-\mathcal{Q}\|_{\text{TV}}:=\sup_{A\subseteq\Omega}|\mathcal{P}(A)-\mathcal{Q}(A)|=\frac{1}{2}\sum_{\omega\in\Omega}|\mathcal{P}(\omega)-\mathcal{Q}(\omega)|.
	\end{equation}
	We now state the main result of this paper.
	\begin{thm}\label{main-thm}
		Let $c\in\mathbb{R}$, and $d_{\emph{TV}}\left(\emph{Poi}(1+e^{-c}),\emph{Poi}(1)\right)$ denote the total variation distance between the laws of the Poisson distributions with parameters $1+e^{-c}$ and $1$. Then the limit profile for the transpose top-$2$ with random shuffle is given by $d_{\emph{TV}}\left(\emph{Poi}(1+e^{-c}),\emph{Poi}(1)\right)$ for every real number $c$, i.e.,
		\[\lim_{n\rightarrow\infty}\left\|P^{*\lceil\left(n-\frac{3}{2}\right)\log n+cn\rceil}-U_{A_n}\right\|_{\text{TV}}=d_{\emph{TV}}\left(\emph{Poi}(1+e^{-c}),\emph{Poi}(1)\right),\;c\in\mathbb{R}.\]
	\end{thm}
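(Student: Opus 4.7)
My plan is to invoke the non-commutative Fourier comparison (\Cref{thm:comparison}) to pass from the TT2R measure $P$ to the conjugation-invariant walk $Q$ on $A_n$ driven by a uniformly random $3$-cycle, and then to extract the Poisson asymptotic from a character sum in direct analogy with Teyssier's analysis of the random transposition shuffle \cite{T-limit}. Because $Q$ is a class function, its Fourier transform $\widehat{Q}(\rho)=\eta_\rho I$ is a scalar for every irreducible $\rho\in\widehat{A_n}$, where $\eta_\rho$ is the normalized character of $\rho$ on a $3$-cycle. Applying \Cref{thm:comparison} should yield a bound of the form
\[
\bigl\|P^{*k}-U_{A_n}\bigr\|_{\text{TV}}^2 \;\leq\; \tfrac{1}{4}\sum_{\rho\neq\trivial} d_\rho^2\, \eta_\rho^{2k'} \;+\; \text{error}(n,k),
\]
where $k=\lceil(n-\tfrac{3}{2})\log n+cn\rceil$ and $k'$ is a rescaled time reflecting that a TT2R step performs a $\Theta(1/n)$ fraction of the work of a uniform $3$-cycle step (because each TT2R generator only touches positions involving $n-1$ and $n$). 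A matching lower bound would come from testing against a distinguishing statistic built from the same representations that dominate the upper bound, or equivalently from the algorithmic ``marking'' process described after \eqref{eq:TT2R_defn}: the number of unmarked positions $i\in\{1,\dots,n-2\}$ after $k$ steps should be asymptotically Poisson with mean $e^{-c}$.

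Next, I would estimate the character ratios $\eta_\lambda$ on a $3$-cycle for $\lambda\vdash n$ using the Murnaghan--Nakayama rule (equivalently, Frobenius' formula). For partitions with $\lambda_1$ close to $n$ one obtains $\eta_\lambda^{2k'}\approx e^{-2c(n-\lambda_1)}(1+o(1))$, so that the dominant contribution to the sum above comes from partitions with $j:=n-\lambda_1$ bounded. For these near-trivial representations, $d_\lambda$ factorizes as $\binom{n}{j}$ times the Specht dimension of the sub-partition obtained by removing the first row, modulo polynomial corrections absorbed in the $o(1)$. The resulting sum collapses into a generating series indexed by partitions of $j$, and I would recognize it as $4\,d_{\text{TV}}(\text{Poi}(1+e^{-c}),\text{Poi}(1))^2$ via the same algebraic identity between hook character sums and Poisson probabilities that appears in \cite{T-limit}; the small technical novelty here is the projection onto $A_n$, for which self-conjugate partitions must be treated separately and shown to be negligible at the cutoff window.

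The principal technical obstacle lies in the first step: since $P$ is not central, $\widehat{P}(\rho)$ is a genuine matrix rather than a scalar, and the comparison must track full singular-value information rather than eigenvalues. Establishing that $\text{error}(n,k)=o(1)$ uniformly in $\rho$ at the sharp time $(n-\tfrac{3}{2})\log n+cn$ — and, crucially, on precisely the borderline representations indexed by $\lambda$ with $\lambda_1$ close to $n$ that drive the main term — is where the non-commutative Fourier analogue of Nestoridi's comparison method is essential. Once this comparison is carried out with enough precision, the asymptotic character-theoretic bookkeeping closely parallels the random transposition case and delivers the Poisson total variation distance exactly as stated.
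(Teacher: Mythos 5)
Your opening move coincides with the paper's: apply \Cref{thm:comparison} with $\nu_n=Q$ (all $3$-cycles) and $\mu_n=P$. But from there you drift. The comparison theorem does not produce an inequality of the form $\|P^{*k}-U_{A_n}\|_{\text{TV}}^2\le\tfrac14\sum_{\rho}d_\rho^2\eta_\rho^{2k'}+\mathrm{error}$; once hypothesis \eqref{eq:hypo2} is verified it shows that the two total variation distances, each taken at its own cutoff-window time, differ by $o(1)$, so the limit profile of $P$ is inherited verbatim from that of $Q$, which is simply cited from Nestoridi--Olesker-Taylor \cite{N-limit}. Consequently neither your planned Murnaghan--Nakayama re-derivation of the Poisson character sum nor a separate lower bound via a distinguishing statistic or the marking process is needed: both directions come for free from the comparison, and that part of your proposal reproduces work the paper imports.

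The genuine gap is that you never say how to verify \eqref{eq:hypo2}; you explicitly flag it as ``the principal technical obstacle'' and stop there, whereas it is the entire content of the proof. The paper resolves it by combining Schur's lemma, which makes $\widehat{Q}(\rho)$ a scalar $\mathcal{C}_\lambda$ times the identity (\Cref{thm:3-cycle_eigenvalues_irr}), with the explicitly known spectrum of $\widehat{P}(\rho)$ from the earlier TT2R paper (\Cref{thm:TT2R_eigenvalues_irr}): eigenvalues $\pm\frac{c(b_T(n))+c(b_T(n-1))}{2n-3}$ indexed by upper standard tableaux. Because one of the two matrices is scalar, the trace in \eqref{eq:hypo2} collapses to $\sum_T\bigl(\mathcal{C}_\lambda^{\lceil\frac n3(\log n+c)\rceil}-\mathcal{E}_T^{\lceil(n-\frac32)(\log n+c)\rceil}\bigr)^2$, and no singular-value bookkeeping is required at all. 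The real work is the two-regime estimate you do not supply: for $\lambda_1\le n-M$ one uses $|\mathcal{E}_T|\le\frac{2\lambda_1-3}{2n-3}$, $d_\lambda\le\binom{n}{\lambda_1}d_\xi$, $\sum_{\xi\vdash n-\lambda_1}d_\xi^2=(n-\lambda_1)!$ and the bulk bound \cite[Lemma 3.7]{N-limit} (\Cref{prop:Error-term}); for $n-M\le\lambda_1<n$ one splits tableaux according to whether $n-1,n$ both lie in the first row, matches $\bigl(\frac{2\lambda_1-3}{2n-3}\bigr)^{\lceil(n-\frac32)(\log n+c)\rceil}\approx e^{-(n-\lambda_1)(\log n+c)}\approx\mathcal{C}_\lambda^{\lceil\frac n3(\log n+c)\rceil}$, and controls the remaining tableaux via the hook-length bound \eqref{eq:dimension-ineq} (\Cref{prop:Main-term}). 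Finally, your heuristic that a TT2R step does a $\Theta(1/n)$ fraction of the work of a uniform $3$-cycle step is quantitatively wrong: the cutoff times $(n-\tfrac32)\log n$ and $\tfrac n3\log n$ differ by a constant factor (about $3$), and \eqref{eq:hypo2} holds only with these precisely matched times; with your rescaling the eigenvalue powers would not align and the comparison would fail.
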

	Let us recall the random walk on $A_n$ generated by all $3$-cycles in $A_n$, i.e., it is the random walk on $A_n$ driven by the probability measure $Q$, defined on $A_n$, as follows:
	\begin{equation}\label{eq:3-cycle_defn}
		Q(\pi)=
		\begin{cases}
			\frac{3}{n(n-1)(n-2)}&\text{ if }\pi\text{ is a }3\text{-cycle in }A_n,\\
			0&\text{ otherwise}.
		\end{cases}
	\end{equation}
	The random walk on $A_n$ generated by all $3$-cycles satisfies the cutoff phenomenon with time $\frac{n}{3}\log n$ and window $O(n)$. The limit profile of this random walk is $d_{\text{TV}}\left(\text{Poi}(1+e^{-c}),\text{Poi}(1)\right)$, thanks to Nestoridi and Olesker-Taylor \cite{N-limit}. We prove \Cref{main-thm} by comparing the transpose top-$2$ with random shuffle and the random walk on $A_n$ generated by all the $3$-cycles. We conclude this section by giving the organisation of this article.
	
	\subsection*{Organisation of this paper} In \Cref{RWFG}, we focus on the random walks on a finite group and provide our comparison method, which relies on the Fourier analysis of the group. In \Cref{sec:spectrum}, we recall the necessary representation theory of $A_n$ and lay the ground work for proving  \Cref{main-thm}. Finally, we prove \Cref{main-thm} in \Cref{sec:main-proof}. On a purely graph-theoretic note, we will answer a question asked by Huang and Huang \cite[Problem 1]{AGG_1} in \Cref{sec:graph-spectrum}. Finally, in \Cref{sec:example}, we present an example illustrating our comparison method for non-commuting transition matrices.
\section{Comparison of limit profiles for random walks on a finite group}\label{RWFG}
	In this section, we give a method for comparing the limit profiles for various random walks on a finite group. We briefly recall the random walks on a finite group and the representation theory of the group. Then, the main result of this section will be proved. We end this section with some remarks on our comparison technique.
	
	Let $G$ be a finite group, and $\gamma$ be a probability measure on $G$.  Then, the  (left-invariant) \emph{random walk on $G$ driven by $\gamma$} is a time homogeneous (discrete-time) Markov chain $\{X_t\}_{t=0}^{\infty}$ with state space $G$ and one-step transition probabilities
	\[\mathbb{P}\left(X_1=y\mid X_0=x\right):=\gamma(x^{-1}y),\text{ for all }x,y\in G.\]
	Fix an initial distribution $\gamma_0$. Let $\{Y_0,Y_1,Y_2,Y_3,\dots\}$ be a sequence of independent $G$-valued random variables such that $Y_0$ has law $\gamma_0$ and $Y_1,Y_2,Y_3,\dots$ have identical law $\gamma$. Then, the left-invariant random walk defined above can be obtained as
	\begin{equation}\label{eq:rwfg-def}
		X_k:=Y_0Y_1Y_2\dots Y_k\text{ for all }k\geq 1.
	\end{equation}
	Given the initial law $\gamma_0$, the distribution after $k$ transitions is given by the law of $X_k$. The law of $X_k$ is given by $\gamma_0*\gamma^{*k}$, where $\gamma^{*k}$ is the $k$-fold self-convolution of $\gamma$. Recall that the convolution of two real valued functions $\alpha$ and $\beta$ (defined on $G$), denoted $\alpha*\beta$, is defined by
	\[\alpha*\beta(x):=\sum_{g\in G}\alpha(g)\beta(g^{-1}x)\text{ for all }x\in G.\]
	The uniform measure $U_G$ given by $U_G(g)=\frac{1}{|G|}$ satisfies $U_G=U_G*\gamma$; thus, it is a stationary distribution of the random walk on $G$ driven by $\gamma$. Stationary distribution is unique when the random walk is irreducible. \emph{The random walk on $G$ driven by the probability measure $\gamma$ is irreducible if and only if the support of $\gamma$,  i.e., the set $\{x\in G:\gamma(x)>0\}$, generates the group $G$} \cite[Proposition 2.3]{S}. Moreover, if the random walk is aperiodic then the law of $X_k$ converges to the stationary distribution $U_G$ as $k\rightarrow\infty$. The random walk $\{X_t\}_{t=0}^{\infty}$ is reversible if and only if $\gamma(g)=\gamma(g^{-1})$ for all $g\in G$. 
	For any $x\in G$, let $\delta_x$ be the probability measure on $G$ that takes value $1$ at $x$ and $0$ elsewhere. Then, we have the following:
	\[\|\delta_x*\gamma^{*k}-U_G\|_{\text{TV}}=\|\delta_y*\gamma^{*k}-U_G\|_{\text{TV}}\text{ for all }x,y\in G.\]
	For an irreducible and aperiodic random walk on the group $G$ driven by the probability measure $\gamma$, the (total variation) \emph{mixing time} is a measure of the number of transitions required for the random walk to approach $U_G$ up to a given tolerance. More formally, given $\varepsilon>0$, the $\varepsilon$-mixing time, denoted $t_{\text{mix}}(\varepsilon)$, is defined by
	\[t_{\text{mix}}(\varepsilon):=\min\{k:\|\gamma^{*k}-U_G\|_{\text{TV}}<\varepsilon\}.\]
    
    Now, we are in a position to define the cutoff phenomenon and limit profile; both of these concepts are defined for a sequence of random walks. 
	\begin{defn}\label{def:cutoff-limit_profile}
		Let $\{G_n\}_n$ be a sequence of finite groups. For each $n\geq1$, let $\gamma_n$ be a probability measure defined on $G_n$ such that the random walk on $G_n$ driven by $\gamma_n$ is irreducible and aperiodic. The sequence is said to satisfy the \emph{total variation cutoff phenomenon} if there are sequences $\{\tau_n\}_n$ (\emph{cutoff time}) and  $\{w_n\}_n$ (\emph{cutoff window}) such that $\tau_n\rightarrow\infty$, $w_n=o(\tau_n)$, and the following holds:
		\[\lim_{c\rightarrow-\infty}\liminf_{n\rightarrow\infty}\|\gamma_n^{*\lceil\tau_n+cw_n\rceil}-U_G\|_{\text{TV}}=1,
        \,\,\lim_{c\rightarrow\infty}\limsup_{n\rightarrow\infty}\|\gamma_n^{*\lceil\tau_n+cw_n\rceil}-U_G\|_{\text{TV}}=0.\]
		The (total variation) \emph{limit profile} can be formally defined as a function $f:\mathbb{R}\rightarrow\mathbb{
			R}$ such that
		\begin{equation*}\label{eq:limit_profile-def}
			f(c):=\lim_{n\rightarrow\infty}\|\gamma_n^{*\lceil\tau_n+cw_n\rceil}-U_G\|_{\text{TV}},
		\end{equation*}
		provided the limit exists for each (fixed) real number $c$. In case, the limit does not exist, similar definition could be given for the $\limsup$ and $\liminf$.
	\end{defn}
    \begin{rem}
        Given a sequence of irreducible and aperiodic Markov chains, if we denote the $\varepsilon$-mixing time of the $n$th chain by $t^{(n)}_{\text{mix}}(\varepsilon)$, then the usual definition of the cutoff phenomenon says $t^{(n)}_{\text{mix}}(\varepsilon)\rightarrow\infty$ and $\displaystyle\lim_{n\rightarrow\infty} t^{(n)}_{\text{mix}}(1-\varepsilon)/t^{(n)}_{\text{mix}}(\varepsilon)=1$ for all $0<\varepsilon<1$.
        \Cref{def:cutoff-limit_profile} presents an equivalent definition of the cutoff phenomenon.
    \end{rem}
	We now focus on the representation theory of finite groups. Let $V$ be a finite-dimensional complex vector space and GL$(V)$ be the group of all invertible linear operators on $V$. Let $G$ be a finite group. Let $I$ denote the identity element of GL$(V)$ (i.e. the identity operator on $V$) and $\e$ denote the identity element of $G$. A (complex) \emph{linear representation} $(\rho,V)$ of $G$ is a homomorphism $\rho:G\rightarrow \text{GL}(V)$. In particular, $\rho(\e)=I$ and $\rho(g^{-1})=\rho(g)^{-1},\;g\in G$. The dimension of the vector space $V$ is said to be the \emph{dimension} of the representation $\rho$ and is denoted by $d_{\rho}$. The representation space $V$ is called the \emph{$G$-module} corresponding to the representation $\rho$. Given $\rho$, we simply say $V$ is a representation of $G$. For example, let $V$ be one-dimensional. Then, $\trivial:G\rightarrow \text{GL}(V)$, defined by $\trivial(g)\mapsto(v\mapsto v)$, for all $v\in V$ and $g\in G$, is a representation of $G$, known as the \emph{trivial representation} of $G$. We now define the right regular representation of $G$. 
	\begin{defn}
		Let $\mathbb{C}[G]$ be the group algebra consisting of all formal linear combinations of the elements of $G$ with complex coefficients, i.e. $\mathbb{C}[G]=\{\sum_{g}c_gg\mid c_g\in\mathbb{C},\; g\in G\}$. Then the \emph{right regular representation} $R:G\longrightarrow \text{GL}(\mathbb{C}[G])$ of $G$ is defined by 
		\[R(g)\left(\displaystyle
		\sum_{h\in G}C_hh\right)=\displaystyle\sum_{h\in G}C_hhg^{-1},\quad C_h\in\mathbb{C},\]
		i.e., $R(g)$ is an invertible matrix over $\mathbb{C}$ of order $|G|\times|G|$.
	\end{defn}
	For $g\in G$, the trace of the matrix $\rho(g)$ is said to be the \emph{character} value of $\rho$ at $g$ and is denoted by $\chi^{\rho}(g)$. The character values are constants on conjugacy classes, i.e., the characters are class functions. We also have $\chi^{\rho}(\e)=d_{\rho}$, and $\chi^{\rho}(g^{-1})=\overline{\chi^{\rho}(g)}$, the complex conjugate of $\chi^{\rho}(g)$. A vector subspace $W$ of $V$ is said to be \emph{stable} (or \emph{invariant}) under $\rho$ if $\rho(g)\left(W\right)\subset W$ for all $g$ in $G$. \emph{If $W$ is a stable subspace of $V$ under $\rho$, then there exists a complement $W^0$ of $W$ in $V$ which is stable under $\rho$} (\cite[Theorem 1]{Serre}). The representation $\rho$ is \emph{irreducible} if $V$ has no non-trivial proper stable subspace. For example the trivial representation defined above is irreducible. Two representations $(\rho_1,V_1)$ and $(\rho_2,V_2)$ of $G$ are are said to be \emph{isomorphic} if there exists an invertible linear map $T:V_1\rightarrow V_2$ such that $T\circ\rho_1(g)=\rho_2(g)\circ T$ for all $g\in G$. Schur's lemma says that \emph{If a group algebra element $\mathfrak{g}\in\mathbb{C}[G]$ commutes with every element of the group $G$, then $\mathfrak{g}$ acts as a scalar on the irreducible $G$-modules} \cite[Proposition 5]{Serre}. We denote the set of all (non-isomorphic) irreducible representations of $G$ using notation $\widehat{G}$. \emph{The right regular representation of $G$ decomposes into irreducible representations with multiplicity equal to their respective dimensions} \cite[p. 18, Corollary 1]{Serre}. Thus we have the following:
	\begin{equation}\label{eq:Group_alg._decom.}
		\mathbb{C}[G]\cong\underset{\rho\in\widehat{G}}{\oplus}\;d_{\rho}V^{\rho},
	\end{equation}
	where $V^{\rho}$ is the irreducible $G$-module corresponding to $\rho\in\widehat{G}$ with dimension $d_{\rho}$. We also have $\displaystyle\sum_{\rho\in\widehat{G}}d_{\rho}^2=|G|$ by equating the dimensions in \eqref{eq:Group_alg._decom.}.
	
	We now define the Fourier transform of a real valued function on $G$. Let $\phi:G\rightarrow\mathbb{R}$ be a function and $(\rho,V)$ be a representation of $G$. Then, the \emph{Fourier transform} of $\phi$ at $\rho$, denoted $\widehat{\phi}(\rho)$, is defined as an operator on $V$ given by 
	\[\widehat{\phi}(\rho):=\sum_{g\in G}\phi(g)\rho(g).
    \]
	Given two functions $\phi,\psi:G\rightarrow \mathbb{R}$, we have $\widehat{\phi*\psi}(\rho)=\widehat{\phi}(\rho)\circ\widehat{\psi}(\rho)$, here $\circ$ denotes the composition of operators. If an ordered basis of $V$ is understood from the context, then we simply think $\widehat{\phi}(\rho)$ and $\widehat{\psi}(\rho)$ as matrices with respect to the basis. In that case $\circ$ is the matrix multiplication. We now recall the Plancherel formula \cite[Theorem 4.1]{D1} below.
	\begin{equation}\label{eq:Plancherel formula}
		\sum_{x\in G}\phi(x^{-1})\psi(x)=\frac{1}{|G|}\sum_{\rho\in\widehat{G}}d_{\rho}\Tr\left(\widehat{\phi}(\rho)\widehat{\psi}(\rho)\right).
	\end{equation}
	For a random walk on $G$ driven by the probability measure $\gamma$ (defined on the group $G$), the transition matrix is given by $\widehat{\gamma}(R)$. Here $R$ is the right regular representation defined above. Now, we introduce our comparison method; the main result of this section is given below.
	\begin{thm}\label{thm:comparison}
		Let $\{G_n\}_{n=1}^{\infty}$ be a sequence of finite groups. For each $n\geq 1$, let $\nu_n$ and $\mu_n$ be two probability measures defined on $G_n$ such that the random walks on $G_n$ driven by them are irreducible, aperiodic, and reversible. Assume that the random walk on $G_n$ driven by $\nu_n$ satisfies cutoff phenomenon at time $\tau_{\nu,n}$ with window of order $w_{\nu,n}$, and it has limit profile
		\begin{equation}\label{eq:hypo1}
			f(c):=\lim_{n\rightarrow\infty}\left\|\nu_n^{*\lceil\tau_{\nu,n}+cw_{\nu,n}\rceil}-U_{G_n}\right\|_{\emph{TV}},\quad\text{for all }c\in\mathbb{R}.
		\end{equation}
		If there exist real numbers $\tau_{\mu,n}$ and $w_{\mu,n}$ such that $w_{\mu,n}=o\left(\tau_{\mu,n}\right)$ and
		\begin{equation}\label{eq:hypo2}
			\lim_{n\rightarrow\infty}
            \sum_{\rho\in\widehat{G}}
            d_{\rho} 
            \Tr\left(\left(\widehat{\nu_n}(\rho)\right)^{\lceil\tau_{\nu,n}+cw_{\nu,n}\rceil}-\left(\widehat{\mu_n}(\rho)\right)^{\lceil\tau_{\mu,n}+cw_{\mu,n}\rceil}\right)^2=0,
		\end{equation}
		then the random walk on $G_n$ driven by $\mu_n$ exhibits cutoff phenomenon at time $\tau_{\mu,n}$ with a window of order $w_{\mu,n}$; moreover, its limit profile is given by $f(c)$.
	\end{thm}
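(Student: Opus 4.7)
The plan is to deduce the limit profile for $\mu_n$ from that of $\nu_n$ by showing their iterates at the stipulated times are close in total variation, using the Diaconis--Shahshahani $L^2$ upper bound lemma applied to the \emph{difference} of the two measures rather than to each deviation from uniformity.

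First I would observe that the reversibility of both walks makes each of the convolution powers $\nu_n^{*a}$ and $\mu_n^{*b}$ a symmetric function on $G_n$ (i.e.\ invariant under $x\mapsto x^{-1}$), hence the difference $\psi := \nu_n^{*a} - \mu_n^{*b}$ is symmetric. This is the key input: it allows the Plancherel formula \eqref{eq:Plancherel formula}, applied with $\phi(x)=\psi(x^{-1})=\psi(x)$, to produce a sum of traces of \emph{squared} Fourier transforms. Combined with Cauchy--Schwarz one obtains
\[
4\bigl\|\psi\bigr\|_{\text{TV}}^2 \;\leq\; |G_n|\sum_{x\in G_n}\psi(x)^2 \;=\; \sum_{\rho\in\widehat{G_n}} d_\rho\,\text{Tr}\bigl(\widehat{\psi}(\rho)^2\bigr).
\]
Since $\widehat{\psi}(\rho) = \widehat{\nu_n}(\rho)^a - \widehat{\mu_n}(\rho)^b$ and the trivial representation contributes zero (both Fourier transforms equal $1$ there), specialising $a=\lceil\tau_{\nu,n}+cw_{\nu,n}\rceil$ and $b=\lceil\tau_{\mu,n}+cw_{\mu,n}\rceil$ and invoking hypothesis \eqref{eq:hypo2} gives
\[
\lim_{n\to\infty}\bigl\|\nu_n^{*\lceil\tau_{\nu,n}+cw_{\nu,n}\rceil}-\mu_n^{*\lceil\tau_{\mu,n}+cw_{\mu,n}\rceil}\bigr\|_{\text{TV}}=0
\]
for every fixed $c\in\mathbb{R}$.

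The second step is a triangle inequality: combining the display above with \eqref{eq:hypo1} yields
\[
\lim_{n\to\infty}\bigl\|\mu_n^{*\lceil\tau_{\mu,n}+cw_{\mu,n}\rceil}-U_{G_n}\bigr\|_{\text{TV}}=f(c),
\]
so $\mu_n$ has limit profile $f$ at time $\tau_{\mu,n}$ on scale $w_{\mu,n}$. The cutoff statement for $\mu_n$ then falls out: the cutoff of $\nu_n$ forces $f(c)\to 1$ as $c\to-\infty$ and $f(c)\to 0$ as $c\to+\infty$, and the hypothesis $w_{\mu,n}=o(\tau_{\mu,n})$ completes the cutoff definition at time $\tau_{\mu,n}$ with window $w_{\mu,n}$.

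I do not expect a serious obstacle in the proof of \Cref{thm:comparison} itself --- it is the classical $L^2$ upper bound lemma repackaged as a comparison statement, with reversibility playing the sole structural role of making the Fourier transforms self-adjoint so that the Hilbert--Schmidt norms in the bound become traces of squares rather than traces of a matrix times its adjoint. The genuine work lies downstream, in verifying hypothesis \eqref{eq:hypo2} for the pair (TT$2$R shuffle, all-$3$-cycles walk on $A_n$), which is what the representation-theoretic development in \Cref{sec:spectrum} and \Cref{sec:main-proof} is designed to support.
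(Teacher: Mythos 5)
Your proposal is correct and follows essentially the same route as the paper: symmetry of the measures (from reversibility and uniform stationarity) makes the difference $\nu_n^{*a}-\mu_n^{*b}$ symmetric, Cauchy--Schwarz plus the Plancherel formula bound its total variation norm by the trace sum in \eqref{eq:hypo2}, and the (reverse) triangle inequality together with \eqref{eq:hypo1} transfers the limit profile and cutoff to $\mu_n$. The only cosmetic difference is that the paper bounds the difference of the two distances to uniformity directly rather than passing explicitly through $\|\nu_n^{*a}-\mu_n^{*b}\|_{\text{TV}}\to 0$, which is the same estimate.
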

	\begin{proof}
		We first note that the measures $\nu_n$ and $\mu_n$ are symmetric, i.e., $\nu_n(g)=\nu_n(g^{-1})$ and $\mu_n(g)=\mu_n(g^{-1})$ for all $g\in G_n$, because the random walks on $G_n$ driven by $\mu_n$ and $\nu_n$ are both reversible, and the stationary distribution is the uniform distribution on $G_n$. More precisely,
        \[U_{G_n}(\e)\;\;\times\;\;\text{\begin{minipage}{1.6in}
        one-step transition probability from $\e$ to $g$
        \end{minipage}}\;\;=\;\;U_{G_n}(g)\;\;\times\;\;\text{\begin{minipage}{1.6in}
        one-step transition probability from $g$ to $\e$
        \end{minipage}}\;,\]
        for all $g\in G_n$. Here, $e$ denotes the identity element of $G_n$.
        
        Let $c\in\mathbb{R}$. Throughout the proof we write $\tau_{\nu,n}+cw_{\nu,n}$ as $t_{\nu}$, and $\tau_{\mu,n}+cw_{\mu,n}$ as $t_{\mu}$ to avoid notational complication. Now we have that
		\begin{align}\label{eq:comparison_method1}
			&\Big|\left\|\nu_n^{*\lceil t_{\nu}\rceil}-U_{G_n}\right\|_{\text{TV}}-\left\|\mu_n^{*\lceil t_{\mu}\rceil}-U_{G_n}\right\|_{\text{TV}}\Big|\nonumber\\
			=\;&\Big|\frac{1}{2}\sum_{g\in G_n}\left(\left|\nu_n^{*\lceil t_{\nu}\rceil}(g)-U_{G_n}(g)\right|-\left|\mu_n^{*\lceil t_{\mu}\rceil}(g)-U_{G_n}(g)\right|\right)\Big|,\text{ from definition \eqref{eq:TV-def}}\nonumber\\
			\leq\;&\frac{1}{2}\sum_{g\in G_n}\Big|\left|\nu_n^{*\lceil t_{\nu}\rceil}(g)-U_{G_n}(g)\right|-\left|\mu_n^{*\lceil t_{\mu}\rceil}(g)-U_{G_n}(g)\right|\Big|,\text{ by triangle inequality}\nonumber\\
			\leq\;&\frac{1}{2}\sum_{g\in G_n}\Big|\left(\nu_n^{*\lceil t_{\nu}\rceil}(g)-U_{G_n}(g)\right)-\left(\mu_n^{*\lceil t_{\mu}\rceil}(g)-U_{G_n}(g)\right)\Big|,\text{ using triangle inequality}\nonumber\\
			=\;&\frac{1}{2}\sum_{g\in G_n}\Big|\nu_n^{*\lceil t_{\nu}\rceil}(g)-\mu_n^{*\lceil t_{\mu}\rceil}(g)\Big|\leq\;\frac{1}{2}\;\sqrt{\sum_{g\in G_n}|G_n|\left(\nu_n^{*\lceil t_{\nu}\rceil}(g)-\mu_n^{*\lceil t_{\mu}\rceil}(g)\right)^2},
		\end{align}
	where	the inequality in \eqref{eq:comparison_method1} follows from Cauchy–Schwarz inequality. For every $g\in G_n$, let us set 
		\[\eta_n(g):=\nu_n^{*\lceil t_{\nu}\rceil}(g)-\mu_n^{*\lceil t_{\mu}\rceil}(g).\]
		The self-convolution of a symmetric measure is symmetric; therefore, $\eta_n(g)=\eta_n(g^{-1})$ for all $g\in G$. Thus, the Plancherel formula \eqref{eq:Plancherel formula} and \eqref{eq:comparison_method1} implies
		\begin{align}
			\Big|\left\|\nu_n^{*\lceil t_{\nu}\rceil}-U_{G_n}\right\|_{\text{TV}}-\left\|\mu_n^{*\lceil t_{\mu}\rceil}-U_{G_n}\right\|_{\text{TV}}\Big|^{2}\leq\;&\frac{1}{4}\sum_{\rho\in \widehat{G}_n} d_{\rho} \Tr\left(\widehat{\eta}_n(\rho)\right)^2 \nonumber\\
			=&\frac{1}{4}\sum_{\rho\in \widehat{G}_n}  d_{\rho} \Tr\left(\widehat{\nu_n^{*\lceil t_{\nu}\rceil}}(\rho)-\widehat{\mu_n^{*\lceil t_{\mu}\rceil}}(\rho)\right)^2 \label{eq:comparison_method2}\\	
			=&\frac{1}{4}\sum_{\rho\in \widehat{G}_n}  d_{\rho} \Tr\left(\left(\widehat{\nu_n}(\rho)\right)^{\lceil t_{\nu}\rceil}-\left(\widehat{\mu_n}(\rho)\right)^{\lceil t_{\mu}\rceil}\right)^2.\label{eq:comparison_method3}\
		\end{align}
		The equality in \eqref{eq:comparison_method2} follows from the fact $\widehat{\eta}_n(\rho)=\widehat{\nu_n^{*\lceil t_{\nu}\rceil}}(\rho)-\widehat{\mu_n^{*\lceil t_{\mu}\rceil}}(\rho)$ for all $\rho\in \widehat{G}_n$. Now, letting $n\rightarrow\infty$ in \eqref{eq:comparison_method3}, the hypotheses \eqref{eq:hypo1} and \eqref{eq:hypo2} implies
		\begin{equation}\label{eq:comparison_method}
			\lim_{n\rightarrow\infty}\left\|\mu_n^{*\lceil\tau_{\mu,n}+cw_{\mu,n}\rceil}-U_{G_n}\right\|_{\emph{TV}}=f(c),\quad c\in\mathbb{R}.
		\end{equation}
		The cutoff phenomenon of the random walk on $G_n$ driven by $\nu_n$ ensures
		\[\lim_{c\rightarrow-\infty}f(c)=1\text{ and }\lim_{c\rightarrow\infty}f(c)=0.\]
		Thus, the theorem follows from \eqref{eq:comparison_method}.\qedhere
	\end{proof}
\begin{rem}
    For the case of random walks on a finite group, \Cref{thm:comparison} is the Fourier analysis analogue of Nestoridi's comparison technique \cite[Theorem 5]{N-star_limit}. It is useful when the random walks are defined on a group that only has irreducible representations of `small' dimensions (viz. the dihedral group). It is also useful when the transition matrices are simultaneously block-diagonalizable with blocks of `small' size. In \Cref{sec:example}, we will demonstrate our comparison method (\Cref{thm:comparison}) for the latter case, with each block size at most $2$.
\end{rem}
	\section{The spectrum of the transition matrices}\label{sec:spectrum}
	The main goal of this section is to prepare the platform for the proof of \Cref{main-thm}. We recall the spectrum of the transition matrices for two random walks on $A_n$ driven by $P$ and $Q$. Let us first define some combinatorial objects that will be used for the rest of this paper.
	
	Let $n$ be a positive integer. A \emph{partition} of $n$, denoted $\lambda:=(\lambda_1,\cdots,\lambda_r)\vdash n$, is defined as a weakly decreasing sequence $(\lambda_1,\cdots,\lambda_r)$ of positive integers such that $\sum_{i=1}^{r}\lambda_i=n$. The partition $\lambda$ can be pictorially visualised using its \emph{Young diagram}. The Young diagram of $\lambda$ is a left-justified arrangement of $r$ rows of boxes with $\lambda_i$ boxes in the $i^{\text{th}}$ row. For example there are five partitions of the positive integer $4$ viz. (4), (3,1), (2,2), (2,1,1) and (1,1,1,1), and the corresponding Young diagrams are given in Figure \ref{fig: Yng_diag_with_4_boxes}.
	\begin{figure}[h]
		\centering
		\tiny{
			$\begin{array}{cclll}
				\yng(4)&\hspace{0.5cm}\yng(3,1)& \hspace{0.5cm}\yng(2,2) & \hspace{0.5cm}\yng(2,1,1) & \hspace{0.75cm}\yng(1,1,1,1)\\
				(4)\;&\;\quad(3,1)&\quad\;(2,2)&\quad(2,1,1)&\;(1,1,1,1)
			\end{array}$}
		\caption{Young diagrams with $4$ boxes.}\label{fig: Yng_diag_with_4_boxes}
	\end{figure}
	The \emph{Young tableaux} of shape $\lambda$ or simply \emph{$\lambda$-tableaux}, are obtained by filling the numbers $1,\dots,n$ in the boxes of the Young diagram of $\lambda$. A $\lambda$-tableau is \emph{standard} if the entries in its boxes increase from left to right along rows and from top to bottom along columns. The set of all standard tableaux of a given shape $\lambda$ is denoted by $\std$. For example, the standard Young tableaux of shape $(3,1)$ are listed in Figure \ref{fig: Stab_of_shape_(3,1)}. We write $d_{\lambda}$ to denote the number of standard Young tableaux of shape $\lambda$.
	\begin{figure}[h]
		\centering
		\tiny{
			$\begin{array}{ccc}
				T_1=\begin{array}{c}\young({{\substack{1}}}{{\substack{2}}}{{\substack{3}}},{{\substack{4}}})\end{array}, &\quad T_2=\begin{array}{c}\young({{\substack{1}}}{{\substack{2}}}{{\substack{4}}},{{\substack{3}}})\end{array}, & \quad T_3=\begin{array}{c}\young({{\substack{1}}}{{\substack{3}}}{{\substack{4}}},{{\substack{2}}})\end{array}
			\end{array}$}
		\caption{Standard Young tableaux of shape $(3,1)$.}\label{fig: Stab_of_shape_(3,1)}
	\end{figure}
	The \emph{content} of a box in row $u$ and column $v$ of a diagram is the integer $v-u$. Given a tableau $T\in\std$, let $b_T(i)$ denote the box in $T$ containing the integer $i$ and its content is denoted by $c(b_T(i))$ for $1\leq i\leq n$. For example, $c(b_{T_1}(1))=0,c(b_{T_1}(2))=1,c(b_{T_1}(3))=2,c(b_{T_1}(4))=-1$ for the standard Young tableau $T_1$ given in Figure \ref{fig: Stab_of_shape_(3,1)}. The \emph{conjugate} the Young diagram $\lambda$, denoted $\lambda^{\prime}$, is obtained by reflecting $\lambda$ with respect to the diagonal consisting of boxes with content $0$. A diagram $\lambda$ is \emph{self-conjugate} if $\lambda^{\prime}=\lambda$. An \emph{upper standard Young tableau} of shape $\lambda$ is a standard Young tableau $T$ such that $c(b_T(2))=1$. For example, $T_1$ and $T_2$ in Figure \ref{fig: Stab_of_shape_(3,1)} are the upper standard Young tableau of shape $(3,1)$. The collection of all upper standard tableaux of shape $\lambda$ is denoted by $\Ustd$. From now on, we denote the cardinality of a (given) set $S$ by $|S|$. A counting argument (see a related discussion in \cite[Section 2, p. 1838]{TT2R}) gives 
	\begin{equation}\label{eq:Ustd-count}
		\begin{cases}
			&|\Ustd\cup\Ustdc|=|\std|=d_{\lambda},\;\text{ for non-self-conjugate }\lambda\vdash n,
			\text{ and }\\
			&d_{\lambda}^+ =d_{\lambda}^-:= |\Ustd|=\;\frac{1}{2}|\std|=d_{\lambda}/2,\; \text{ for self-conjugate }\lambda\vdash n.
		\end{cases}
	\end{equation}
	Let $\Par$ denote the set of all partitions of $n$. We now define two subsets $\Cpar$ and $\Ncpar$ of $\Par$ as follows:
	\begin{align*}
		\Cpar&=\{\lambda\in\Par\mid\lambda=\lambda^{\prime}\},\;\text{ and }\\
		\Ncpar&=\{\lambda\in\Par|\lambda\neq\lambda^{\prime}\text{ and }\lambda_i>\lambda'_i,
		\text{ here }i\text{ is the smallest index satisfying }\lambda_i\neq \lambda'_i\},
	\end{align*}
	i.e., $\Ncpar$ consists of the `fat' non-self-conjugate partitions of $n$ and $\Cpar$ consists of all self-conjugate partitions of $n$. For example, see Figure \ref{fig:CPar+NCPar} (recall Par$(4)$ from Figure \ref{fig: Yng_diag_with_4_boxes}).
	\begin{figure}[h]
		\centering
		\tiny{
			$\begin{array}{cc}
				\text{CPar}(4)=\Bigg\{\begin{array}{c}\yng(2,2)\end{array}\Bigg\},&\text{NCPar}(4)=\Bigg\{\begin{array}{c}\yng(4)\end{array}\hspace*{-0.75ex},\begin{array}{c}\yng(3,1)\end{array}\Bigg\},
			\end{array}$}
		\caption{Example of $\text{CPar}(4)$ and $\text{NCPar}(4)$.}\label{fig:CPar+NCPar}
	\end{figure}
	
	We now briefly recall the representation theory of $A_n$, for more details we refer the the book of   James and Kerber \cite[Chapter 2, \S 2.5]{JK}. For every non-self-conjugate partition $\lambda$ of $n$, there is an irreducible representation of $A_n$; we denote the corresponding irreducible $A_n$-module by $D_{\lambda}$. The dimension of $D_{\lambda}$ is $d_{\lambda}$; moreover, $D_{\lambda}$ and $D_{\lambda'}$ are isomorphic for all non-self-conjugate $\lambda\vdash n$. For each self-conjugate partition $\lambda$ of $n$, there are two non-isomorphic irreducible representations $D^+_{\lambda}$ and $D^-_{\lambda}$ of $A_n$. The dimension of $D^+_{\lambda}$ (respectively, $D^-_{\lambda}$) is $d_{\lambda}^+ $ (respectively, $d_{\lambda}^-$) for every $\lambda\in\Cpar$. Therefore, the set of all irreducible $A_n$-module is given by
	\begin{equation}\label{eq:irr-A_n-module}
		\big\{D_{\lambda}:\lambda\in\Ncpar\big\}\bigcup\big\{D^+_{\lambda},D^-_{\lambda}:\lambda\in\Cpar\big\}.
	\end{equation}
	
	Now, recall $P$ from \eqref{eq:TT2R_defn}. The eigenvalues of $\widehat{P}(\lambda),\lambda\in\Ncpar$ can be obtained from \cite[Theorem 2.4]{TT2R}, and eigenvalues of $\widehat{P}(\lambda^{\pm}),\lambda\in\Cpar$ can be obtained from \cite[Theorem 2.5]{TT2R}. Here, we write $\lambda^+$ (respectively, $\lambda^-$) to denote the index for the irreducible representation $D^+_{\lambda}$ (respectively, $D^-_{\lambda}$). More formally, we have the following.
	\begin{lem}\label{thm:TT2R_eigenvalues_irr}
		For a non-self-conjugate $\lambda\vdash n$, the eigenvalues of $\widehat{P}(\lambda)$ are indexed by the set $\Ustd\cup\Ustdc$. For  self-conjugate $\lambda\vdash n$, $\widehat{P}(\lambda^+)$ and $\widehat{P}(\lambda^-)$ have the same spectrum, and the eigenvalues of $\widehat{P}(\lambda^{\pm})$ are indexed by the set $\Ustd$\footnote{Note that $\Ustd=\Ustd\cup\Ustdc$ for all self-conjugate $\lambda\vdash n$.}. Let $\lambda\vdash n$; suppose $\mathcal{E}_T$ denote the eigenvalue indexed by $T\in \Ustd\cup\Ustdc$. 
		\begin{itemize}
			\item If $n-1$  and $n$ appear in the same row of $T$, then $\mathcal{E}_T=\frac{c(b_T(n))+c(b_T(n-1))}{2n-3}$.
			\item If $n-1$  and $n$ appear in the same column of $T$, then $\mathcal{E}_T=-\frac{c(b_T(n))+c(b_T(n-1))}{2n-3}$.
			\item If $n-1$  and $n$ appear neither in the same row nor in the same row of $T$, then
			\[\begin{cases}
				\mathcal{E}_T=\frac{c(b_T(n))+c(b_T(n-1))}{2n-3},\text{ and }\vspace*{1ex}\\
				\mathcal{E}_S=-\frac{c(b_T(n))+c(b_T(n-1))}{2n-3}.
			\end{cases}\] 
			Here, $S$ is the upper standard Young tableau obtained from $T$ by interchanging the positions of $n$ and $n-1$.
		\end{itemize}
	\end{lem}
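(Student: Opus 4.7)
The plan is to lift the analysis to $S_n$, viewing $P$ as a group-algebra element in $\mathbb{C}[S_n]$ supported on $A_n$, and to diagonalize its action on the Specht module $S^\lambda$ via Young's seminormal basis. The key algebraic step is to rewrite the generating sum: since $(i, n-1, n) = (n-1, n)(i, n)$ and $(i, n, n-1) = (i, n)(n-1, n)$, with $X_n := \sum_{i=1}^{n-1}(i, n)$ denoting the $n$-th Young--Jucys--Murphy element, one obtains
\[(2n-3)\,\widehat{P}(S^\lambda) \;=\; (n-1, n)\,X_n + X_n\,(n-1, n) - \mathbf{1}\]
as an operator on $S^\lambda$. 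This expresses the transition operator as a neat combination of a diagonal YJM element and a single neighbouring transposition.

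Next I would compute on the seminormal basis $\{v_T : T \in \std\}$ of $S^\lambda$. Here $X_n$ is diagonal with eigenvalue $c(b_T(n))$, while $(n-1, n)$ is block-diagonal: the scalar $+1$ (resp.\ $-1$) when $n-1$ and $n$ share a row (resp.\ column) of $T$, and otherwise an explicit $2\times 2$ rotation on $\mathrm{span}\{v_T, v_{T'}\}$ determined by the axial distance $c(b_T(n)) - c(b_T(n-1))$, where $T'$ is obtained from $T$ by swapping $n-1$ and $n$. A direct calculation then yields: the same-row case gives the scalar $c(b_T(n)) + c(b_T(n-1))$; the same-column case gives its negative; and in the remaining case, the $2\times 2$ block of $(2n-3)\widehat{P}(S^\lambda)$ on $\mathrm{span}\{v_T, v_{T'}\}$ has trace $0$ and determinant $-(c(b_T(n)) + c(b_T(n-1)))^2$, hence eigenvalues $\pm(c(b_T(n)) + c(b_T(n-1)))$. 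Dividing by $2n-3$ then recovers the three bullet points.

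Finally I would descend from $S^\lambda$ to the $A_n$-irreducibles using standard branching. For non-self-conjugate $\lambda$, the restriction $S^\lambda\!\downarrow_{A_n}$ is irreducible and isomorphic to $D_\lambda$, and the eigenvalues are re-indexed via the bijection $\std \longleftrightarrow \Ustd \sqcup \Ustdc$ that sends a tableau $T$ with $c(b_T(2)) = 1$ to itself and a tableau $T$ with $c(b_T(2)) = -1$ to its conjugate tableau in $\Ustdc$. For self-conjugate $\lambda$, the branching $S^\lambda\!\downarrow_{A_n} = D^+_\lambda \oplus D^-_\lambda$ interacts with the tableau-conjugation involution on $S^\lambda$ (well-defined because $\lambda = \lambda'$), which commutes with $\widehat{P}(S^\lambda)$ and exchanges $D^+_\lambda$ with $D^-_\lambda$; their spectra therefore coincide and are parameterised by the fundamental domain $\Ustd$ of size $d_\lambda/2$. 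The principal obstacle is this last step: one has to verify that tableau conjugation genuinely swaps $D^+_\lambda$ with $D^-_\lambda$ and that in the third (two-block) case the $\pm$-pairing is compatible with the choice of representatives in $\Ustd$, so that the eigenvalue formulas in the lemma are correctly attached to the chosen upper standard tableau.
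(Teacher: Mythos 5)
Your two computational steps are sound and I could verify them: the rewriting $(2n-3)\widehat{P}(S^{\lambda})=(n-1,n)X_n+X_n(n-1,n)-\1$ is correct (each $3$-cycle factors as stated, and the $i=n-1$ terms of $X_n$ produce the $-2\cdot\1$ that combines with the holding term), and in the seminormal basis this operator acts by the scalar $2c(b_T(n))-1=c(b_T(n))+c(b_T(n-1))$ in the same-row case, by $-(2c(b_T(n))+1)$ in the same-column case, and on each mixed block $\mathrm{span}\{v_T,v_{T'}\}$ by a matrix of trace $0$ and determinant $-(c(b_T(n))+c(b_T(n-1)))^2$, giving the $\pm$ pair. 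Note that the paper itself does not prove this lemma: it quotes Theorems 2.4 and 2.5 of the TT$2$R paper, where the computation is carried out inside $A_n$ using the $A_n$-analogues $J_i$ of the Jucys--Murphy elements and a basis indexed directly by upper standard tableaux; your detour through $S_n$ followed by branching is a legitimate alternative route, and for non-self-conjugate $\lambda$ your re-indexing of $\std$ by $\Ustd\cup\Ustdc$ (contents negate and rows/columns swap under conjugation, so the eigenvalue formulas match) is fine.

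The genuine gap is in your self-conjugate step, and it is exactly the point you flagged: the tableau-conjugation involution does \emph{not} exchange $D^+_{\lambda}$ and $D^-_{\lambda}$. That map is (up to signs and normalization) the intertwiner $A$ between $S^{\lambda}$ and $S^{\lambda}\otimes\mathrm{sgn}$; it commutes with every $\rho(g)$, $g\in A_n$, and after normalizing $A^2=I$ its two eigenspaces are precisely $D^+_{\lambda}$ and $D^-_{\lambda}$. So $A$ fixes each summand, and no equality of spectra follows from it. What does swap the two summands is conjugation by an odd permutation, and you need one that also preserves $P$: the transposition $(n-1,n)$ works, since conjugation by it interchanges $(i,n-1,n)\leftrightarrow(i,n,n-1)$ and fixes $\1$, hence leaves the measure $P$ invariant; consequently $\rho((n-1,n))$ maps $D^+_{\lambda}$ onto $D^-_{\lambda}$ (for self-conjugate $\lambda$ the two constituents are conjugate under odd elements) and intertwines $\widehat{P}(\lambda^+)$ with $\widehat{P}(\lambda^-)$, so their spectra coincide. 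Combining this with the observation that tableau conjugation negates contents and exchanges the row/column cases (so the eigenvalue multiset indexed by $\Lstd$ equals the one indexed by $\Ustd$), the common spectrum of $\widehat{P}(\lambda^{\pm})$ is indexed by $\Ustd$ as claimed. With that replacement your outline becomes a complete proof; the remaining $\pm$ attachment in the third bullet is only a labelling convention for the pair $\{T,S\}$ and needs no further argument.
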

	
Recall $Q$ from \eqref{eq:3-cycle_defn}. We now obtain $\widehat{Q}(\lambda)$ for non-self-conjugate $\lambda\vdash n$, and $\widehat{Q}(\lambda^{\pm})$ for self-conjugate $\lambda\vdash n$. The proof is straightforward application of Schur's lemma \cite[Proposition 5]{Serre}, and it is well known in the literature (for instance, see \cite{Bob_Hough}). However, we present the following lemma to make this article self contained.
	\begin{lem}\label{thm:3-cycle_eigenvalues_irr}
		Let $n>4,\;\lambda\vdash n$, and $\chi^{\lambda}$ denote the irreducible character of the symmetric group $S_n$ indexed by $\lambda$. Then, we have the following:
		\[\begin{cases}
			\widehat{Q}(\lambda)=\frac{\chi^{\lambda}((1,2,3))}{d_{\lambda}}\;I_{d_{\lambda}}&\text{ if }\lambda\text{ is a non-self-conjugate partition of }$n$,\vspace*{1ex}\\
			\widehat{Q}(\lambda^+)=\widehat{Q}(\lambda^-)=\frac{\chi^{\lambda}((1,2,3))}{d_{\lambda}}\;I_{(1/2)d_{\lambda}}&\text{ if }\lambda\text{ is a self-conjugate partition of }$n$.
		\end{cases}\]
		Here, $I_k$ denotes the identity matrix of size $k\times k$.
	\end{lem}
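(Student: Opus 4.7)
The plan is to exploit that $Q$, being uniform on the entire set of $3$-cycles, is a class function on $A_n$, and then apply Schur's lemma.

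First I would verify that for $n>4$ the $3$-cycles form a single conjugacy class of $A_n$: an $S_n$-conjugacy class splits in $A_n$ precisely when its cycle type consists of distinct odd parts, and for $3$-cycles the cycle type is $(3,1^{n-3})$, which contains the repeated part $1$ whenever $n>4$. Consequently $Q$ is constant on conjugacy classes of $A_n$. A routine change-of-variables calculation then yields $\rho(g)\,\widehat{Q}(\rho)\,\rho(g^{-1})=\widehat{Q}(\rho)$ for every irreducible representation $\rho$ and every $g\in A_n$, so $\widehat{Q}(\rho)$ commutes with the image of $A_n$ under $\rho$. By Schur's lemma \cite[Proposition 5]{Serre}, $\widehat{Q}(\rho)=c_{\rho}\,I_{d_{\rho}}$ for some scalar $c_{\rho}\in\mathbb{C}$.

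Next I would compute $c_{\rho}$ by taking traces. Using that the $3$-cycles form a single $A_n$-class on which $\chi^{\rho}$ is constant,
\[
c_{\rho}\,d_{\rho}=\Tr\widehat{Q}(\rho)=\sum_{\pi\in A_n}Q(\pi)\,\chi^{\rho}(\pi)=\chi^{\rho}((1,2,3)),
\]
so $c_{\rho}=\chi^{\rho}((1,2,3))/d_{\rho}$.

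Finally I would translate this formula into the two cases of the lemma via the branching $S^{\lambda}\!\downarrow\! A_n$ of the Specht module. For non-self-conjugate $\lambda\vdash n$, this restriction is irreducible and isomorphic to $D_{\lambda}$, whence $\chi^{D_{\lambda}}((1,2,3))=\chi^{\lambda}((1,2,3))$ and $d_{D_{\lambda}}=d_{\lambda}$, yielding the first case. For self-conjugate $\lambda$, the restriction splits as $D^{+}_{\lambda}\oplus D^{-}_{\lambda}$ with $\dim D^{\pm}_{\lambda}=d_{\lambda}/2$; because the $3$-cycle class of $S_n$ does not split in $A_n$ (for $n>4$, as noted above), the two characters $\chi^{D^{+}_{\lambda}}$ and $\chi^{D^{-}_{\lambda}}$ agree on it and each equals $\tfrac{1}{2}\chi^{\lambda}((1,2,3))$. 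Substituting into the trace computation gives $c_{\lambda^{\pm}}=\chi^{\lambda}((1,2,3))/d_{\lambda}$, matching the stated formula. The argument is almost entirely Schur's lemma; the only delicate point is the bookkeeping in the self-conjugate case, where one needs the splitting behavior of $S^{\lambda}\!\downarrow\! A_n$ together with the fact that the two constituents agree on non-split conjugacy classes.
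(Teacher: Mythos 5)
Your proposal is correct and follows essentially the same route as the paper: Schur's lemma applied to the (central) sum of all $3$-cycles, a trace computation to identify the scalar, and the James–Kerber facts relating $\chi^{D_\lambda}$, $\chi^{D^{\pm}_{\lambda}}$ to $\chi^{\lambda}$, with the $n>4$ hypothesis ruling out the exceptional (split) class in the self-conjugate case. The only cosmetic difference is that you justify centrality via the $3$-cycles forming a single $A_n$-class, whereas the paper simply notes the full sum of $3$-cycles commutes with $A_n$ and evaluates the trace using both $\chi((1,2,3))$ and $\chi((1,3,2))$; both arguments are sound.
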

\begin{proof}
	Let $D$ be an irreducible $A_n$-module and `$\altch$' be the corresponding irreducible character. Then, $\widehat{Q}(D)$ is the action of the group algebra element $\frac{3}{n(n-1)(n-2)}Q_n$ on $D$, where
	\[Q_n:=\sum_{\substack{\pi\text{ is a}\\3\text{-cycle}}}\pi=\sum_{1\leq i<j<k\leq n}\left((i,j,k)+(i,k,j)\right)\;\;\in\mathbb{C}[A_n].\]
	Let us observe that $Q_n$ commutes with all the elements of $A_n$. Thus, Schur's lemma \cite[Proposition 5]{Serre} implies that $Q_n$ acts on $D$ like scalars. Therefore, we have
	$\widehat{Q}(D)=C_D\;I_{\dim(D)}$ for some constant $C_D\in\mathbb{C}$. Now, applying the $\Tr$ from both side we get that
	\[C_D=\frac{3}{n(n-1)(n-2)}\frac{\binom{n}{3}\left(\altch(1,2,3)+\altch(1,3,2)\right)}{\dim(D)}=\frac{\altch(1,2,3)+\altch(1,3,2)}{2\times\dim(D)}.\]
	Thus, we have that
	\begin{equation}\label{eq:eig.v._3-cyc_rw}
		\widehat{Q}(D)=\frac{\altch(1,2,3)+\altch(1,3,2)}{2\times\dim(D)}\;I_{\dim(D)}.
	\end{equation}
	We first consider a non-self-conjugate partition $\lambda\vdash n$. Recall the irreducible $A_n$-module $D_{\lambda}$ and $\dim(D_{\lambda})=d_{\lambda}$. Let $\altch^{\lambda}$ denote the irreducible character of $A_n$ indexed by $\lambda$. Then, using $\altch^{\lambda}=\chi^{\lambda}$ \cite[Theorem 2.5.7]{JK} and \eqref{eq:eig.v._3-cyc_rw} we have that
	\[\widehat{Q}(\lambda)=\widehat{Q}(D_{\lambda})=\frac{\chi^{\lambda}((1,2,3))}{d_{\lambda}}\;I_{d_{\lambda}}.\]
	
	We now focus on a self-conjugate partition $\lambda\vdash n$. Recall the irreducible $A_n$-modules $D_{\lambda}^{\pm}$ and $\dim(D_{\lambda}^{\pm})=d_{\lambda}^{\pm}=\frac{1}{2}d_{\lambda}$, and denote their (respective) characters by $\altch_{\pm}^{\lambda}$. The cycle type for the $3$-cycle is $(3,1^{n-3})\vdash n$; moreover, $(3,1^{n-3})$ is not the partition formed by the \emph{hook-lengths} of the diagonal boxes in $\lambda$ and $n>4$ (because $\lambda$ is self-conjugate). Therefore, we have $\altch_{\pm}^{\lambda}(1,2,3)=\altch_{\pm}^{\lambda}(1,3,2)=\frac{1}{2}\chi^{\lambda}((1,2,3))$ \cite[Theorem 2.5.7]{JK} and hence \eqref{eq:eig.v._3-cyc_rw} implies
	\[\widehat{Q}(\lambda^{\pm})=\widehat{Q}(D_{\lambda}^{\pm})=\frac{\chi^{\lambda}((1,2,3))}{d_{\lambda}}\;I_{d_{\lambda}}.\qedhere\]
\end{proof}
\section{Proof of \Cref{main-thm}}\label{sec:main-proof}
	In this section, we prove \Cref{main-thm}. We use \Cref{thm:comparison} and compare the transpose top-$2$ with random shuffle with the random walks generated by all $3$-cycles. Before coming into the proof of \Cref{main-thm}, we will prove some useful results. Given any partition $\lambda$ of $n$, throughout this section we denote the normalised character $\frac{\chi^{\lambda}(1,2,3)}{d_{\lambda}}$ by $\mathcal{C}_{\lambda}$, and the largest part of $\lambda$ by $\lambda_1$.
	\begin{lem}\label{prop:Error-term}
		Given any $\varepsilon>0$ and $c\in\mathbb{R}$, there exists $M=M(c,\varepsilon)>0$ and sufficiently large $N=N(c,\varepsilon,M)>M$ such that
		\[\sum_{\substack{\lambda\in\Ncpar\cup\Cpar\\\lambda_1\leq n-M}}d_{\lambda}\sum_{T\in\Ustd\cup\Ustdc}\left(\mathcal{C}_{\lambda}^{\lceil\frac{n}{3}(\log n+c)\rceil}-\mathcal{E}_T^{\lceil\left(n-\frac{3}{2}\right)(\log n+c)\rceil}\right)^2<\frac{\varepsilon}{2},\]
		for all $n\geq N$.
		Here, the notations have the same meaning as they are given in \Cref{sec:spectrum}. 
	\end{lem}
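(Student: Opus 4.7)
The plan is to apply $(a-b)^2\le 2a^2+2b^2$ to split the summand, reducing the estimate to showing that each of
\[
\textup{(I)}:=\!\!\!\!\sum_{\substack{\lambda\in\Ncpar\cup\Cpar\\\lambda_1\le n-M}}\!\!\!\! d_\lambda\,|\Ustd\cup\Ustdc|\,\mathcal{C}_\lambda^{2k_1},\qquad \textup{(II)}:=\!\!\!\!\sum_{\substack{\lambda\in\Ncpar\cup\Cpar\\\lambda_1\le n-M}}\!\!\!\! d_\lambda\!\!\sum_{T\in\Ustd\cup\Ustdc}\!\!\mathcal{E}_T^{2k_2}
\]
is at most $\varepsilon/8$ for $M=M(c,\varepsilon)$ and $n\ge N$ large enough, where $k_1:=\lceil\tfrac{n}{3}(\log n+c)\rceil$ and $k_2:=\lceil(n-\tfrac{3}{2})(\log n+c)\rceil$. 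A preliminary observation is that on $\Ncpar\cup\Cpar$ the constraint $\lambda_1\le n-M$ forces $\lambda_1'\le\lambda_1\le n-M$, so every content $c(b_T(i))$ appearing in the sums lies in $[-(n-M-1),n-M-1]$.

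For \textup{(I)}, I would use $|\Ustd\cup\Ustdc|\le d_\lambda$ to reduce it to $\sum_{\lambda_1\le n-M}d_\lambda^2\mathcal{C}_\lambda^{2k_1}$, the tail (in $n-\lambda_1$) of the standard Plancherel $L^2$ upper bound for the $3$-cycle walk at its cutoff time. This tail is exactly the quantity controlled in the limit-profile analysis of Nestoridi and Olesker-Taylor \cite{N-limit} for this walk; stratifying by $\lambda_1=n-k$ and combining a character bound of the form $|\mathcal{C}_\lambda|\le 1-\Theta(k/n)$ with the dimension estimate $d_\lambda=O(n^k/\sqrt{k!})$ yields slice contributions of order $e^{-2kc}/k!$, summable in $k\ge M$ and vanishing as $M\to\infty$ uniformly in $n\ge N$.

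For \textup{(II)}, \Cref{thm:TT2R_eigenvalues_irr} together with the content observation yields $|\mathcal{E}_T|\le\frac{2(n-M-1)}{2n-3}=1-\frac{2M+1}{2n-3}$, and hence $|\mathcal{E}_T|^{2k_2}\le n^{-(2M+1)}e^{-(2M+1)c}$ for large $n$. Since $\sum_\lambda d_\lambda^2\sim n!$ overwhelms the factor $n^{-(2M+1)}$ for fixed $M$, a blanket pointwise bound is inadequate; I would instead stratify by $\lambda_1=n-k$ for $k\ge M$, sharpening the eigenvalue bound on each slice to $|\mathcal{E}_T|^{2k_2}\le n^{-(2k+1)}e^{-(2k+1)c}$ and combining with the slice-wise dimension and tableau counts used in the cutoff upper bound of \cite{TT2R} to obtain a geometrically decaying sequence of slice contributions, whose sum over $k\ge M$ is below $\varepsilon/8$.

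The main obstacle is executing this second stratification cleanly: since $\mathcal{E}_T$ depends on $T$ through the positions of $n$ and $n-1$ rather than only on $\lambda$, a slice-wise worst-case bound on $|\mathcal{E}_T|$ loses too much. The remedy is to adapt the branching-rule decomposition underlying the TT2R cutoff proof in \cite{TT2R}, which organises $\sum_T\mathcal{E}_T^{2k_2}$ recursively via sub-partitions of $\lambda$ with $n-2$ boxes and captures the required geometric decay in $k=n-\lambda_1$.
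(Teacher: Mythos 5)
Your outline is essentially the paper's proof: the same split via $(a-b)^2\le 2(a^2+b^2)$, the same stratification by $m=n-\lambda_1$ with the bounds $d_\lambda\le\binom{n}{\lambda_1}d_\xi$, $\sum_{\xi\vdash m}d_\xi^2=m!$, $\binom{n}{m}\le n^m/m!$, and the same reliance on Nestoridi--Olesker-Taylor for the $3$-cycle term. Two corrections, however. First, your closing ``obstacle'' for the $\mathcal{E}_T$-sum is illusory: the slice-wise worst-case bound does not lose too much, and it is exactly what the paper uses. By \Cref{thm:TT2R_eigenvalues_irr}, $|\mathcal{E}_T|\le\frac{2\lambda_1-3}{2n-3}=1-\frac{2m}{2n-3}$ for every $T\in\Ustd\cup\Ustdc$, so $\mathcal{E}_T^{2\lceil(n-\frac32)(\log n+c)\rceil}\le e^{-2m(\log n+c)}=n^{-2m}e^{-2mc}$, while $|\Ustd\cup\Ustdc|\le d_\lambda$ gives a slice weight at most $\sum_{\lambda_1=n-m}d_\lambda^2\le\binom{n}{m}^2m!\le n^{2m}/m!$; the product is $e^{-2mc}/m!$, whose tail over $m\ge M$ is below $\varepsilon/8$ for $M$ large. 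No branching-rule decomposition from \cite{TT2R} is needed, and the ``remedy'' you defer to is both unnecessary and left unexecuted, so you should simply delete that paragraph and keep your original stratified estimate.

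Second, for the $\mathcal{C}_\lambda$-sum your sketched character bound $|\mathcal{C}_\lambda|\le 1-\Theta(m/n)$ with an unspecified constant would not close the argument: after raising to the power $\frac{2n}{3}(\log n+c)$, beating the slice count $n^{2m}/m!$ requires decay at the sharp rate (effectively $|\mathcal{C}_\lambda|\lesssim e^{-3m/n}$), uniformly over the entire range $\lambda_1\le n-M$ with $\lambda'_1\le\lambda_1$, and this uniformity is precisely the delicate point. The paper does not reprove it: it bounds $\sum_{\lambda_1\le n-M}d_\lambda^2\,\mathcal{C}_\lambda^{2\lceil\frac n3(\log n+c)\rceil}\le\bigl(\sum_{\lambda_1\le n-M}d_\lambda|\mathcal{C}_\lambda|^{\frac n3(\log n+c)}\bigr)^2$ and invokes \cite[Lemma 3.7]{N-limit} as a black box. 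Since you anchor this step to \cite{N-limit} anyway, your route coincides with the paper's provided you cite that lemma outright rather than rely on the heuristic slice bound.
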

	\begin{proof}
		To prove the lemma, it is enough to show the existence of $M=M(c,\varepsilon)>0$ and sufficiently large $N=N(c,\varepsilon,M)>M$ such that
		\begin{equation}\label{eq:Error-term1}
			\sum_{\substack{\lambda\in\Ncpar\cup\Cpar\\\lambda_1\leq n-M}}d_{\lambda}\sum_{T\in\Ustd\cup\Ustdc}\left(\mathcal{C}_{\lambda}^{\frac{2n}{3}(\log n+c)}+\mathcal{E}_T^{\left(2n-3\right)(\log n+c)}\right)<\frac{\varepsilon}{4},
		\end{equation}
		for all $n\geq N$; because of the following:
		\begin{itemize}
			\item Given any two real numbers $a$ and $b$, we have $(a-b)^2\leq 2(a^2+b^2)$,
			\item $x\leq\lceil x\rceil$ for any real number $x$, and $0\leq \mathcal{C}_{\lambda}^2,\mathcal{E}_T^2\leq 1$.
		\end{itemize}
		
		Let us choose a large enough positive integer $M_1=M_1(c,\varepsilon)$ such that 
		\begin{equation}\label{eq:Error-term2}
			\sum_{m\geq r}\frac{e^{-2mc}}{m!}\leq\frac{\varepsilon}{8}\text{ for all }r\geq M_1.
		\end{equation}
		For every $\lambda\in\Ncpar\cup\Cpar$, we have $c(b_T(n))+c(b_T(n-1))\leq 2\lambda_1-3$, i.e., by \Cref{thm:TT2R_eigenvalues_irr}, $\mathcal{E}_T^2\leq\left(\frac{2\lambda_1-3}{2n-3}\right)^2$ for all $T\in\Ustd\cup\Ustdc$. Now, for every $r\in\{1,\dots,n-1\}$,
		\begin{align}
			&\sum_{\substack{\lambda\in\Ncpar\cup\Cpar\\\lambda_1\leq n-r}}d_{\lambda}\sum_{T\in\Ustd\cup\Ustdc}\mathcal{E}_T^{2\left(n-\frac{3}{2}\right)(\log n+c)} \nonumber\\
			\leq&\sum_{\substack{\lambda\in\Ncpar\cup\Cpar\\\lambda_1\leq n-r}}d_{\lambda}\sum_{T\in\Ustd\cup\Ustdc}\left(1-\frac{2(n-\lambda_1)}{2n-3}\right)^{2\left(n-\frac{3}{2}\right)(\log n+c)}.\label{eq:Error-term3}
		\end{align}
		Using $1-x\leq e^{-x}$ for all $x\geq 0$ and
		$\displaystyle\sum_{\substack{\lambda\vdash n\;:\;\lambda_1\leq n-r\\\lambda\notin\Ncpar\cup\Cpar}}d_{\lambda}\displaystyle\sum_{T\in\Ustd\cup\Ustdc}e^{-2\left(n-\lambda_1\right)(\log n+c)}\geq 0,$
		the expression in the right hand side of \eqref{eq:Error-term3} is less than or equal to
		\begin{equation}\label{eq:Error-term4}
			\sum_{\lambda\vdash n\;:\;\lambda_1\leq n-r}d_{\lambda}\sum_{T\in\Ustd\cup\Ustdc}e^{-2\left(n-\lambda_1\right)(\log n+c)}.
		\end{equation}
		Recalling $|\Ustd\cup\Ustdc|=d_{\lambda}\text{ or }\frac{d_{\lambda}}{2}$ from \eqref{eq:Ustd-count}, the expression in \eqref{eq:Error-term4} is less than
		\begin{align}
			\displaystyle\sum_{\lambda\vdash n\;:\;\lambda_1\leq n-r}d_{\lambda}^2e^{-2\left(n-\lambda_1\right)(\log n+c)}\leq&\sum_{\lambda_1=1}^{n-r}\sum_{\substack{\xi\vdash (n-\lambda_1)\\\xi_1\leq\lambda_1}}\binom{n}{\lambda_1}^2d_{\xi}^2e^{-2\left(n-\lambda_1\right)(\log n+c)}\label{eq:Error-term5}\\
			\leq&\sum_{\lambda_1=1}^{n-r}\binom{n}{\lambda_1}^2e^{-2\left(n-\lambda_1\right)(\log n+c)}\sum_{\xi\vdash (n-\lambda_1)}d_{\xi}^2.\label{eq:Error-term6}
		\end{align}
		The inequality in \eqref{eq:Error-term5} follows from the fact $d_{\lambda}\leq\binom{n}{\lambda_1}d_{\xi}$ for all $\xi\vdash (n-\lambda_1)$ with $\xi_1\leq\lambda_1$. Since $\displaystyle\sum_{\xi\, \vdash \,(n-\lambda_1)}d_{\xi}^2=(n-\lambda_1)!$, writing $m$ for $n-\lambda_1$, the expression in the right hand side of \eqref{eq:Error-term6} is equal to
		\begin{equation}\label{eq:Error-term7}
			\sum_{m=r}^{n-1}\binom{n}{m}^2e^{-2m(\log n+c)}m!\leq \sum_{m=r}^{n-1}\frac{n^{2m}}{m!}e^{-2m(\log n+c)}=\sum_{m=r}^{n-1}\frac{e^{-2mc}}{m!}<\sum_{m\geq r}\frac{e^{-2mc}}{m!}.
		\end{equation}
		The leftmost inequality in \eqref{eq:Error-term7} follows from $\binom{n}{m}\leq\frac{n^m}{m!}$. Thus, the expression in \eqref{eq:Error-term4}, and hence the expression in the right hand side of \eqref{eq:Error-term3} is less than $\displaystyle\sum_{m\geq r}\frac{e^{-2mc}}{m!}$. Therefore, using \eqref{eq:Error-term2}, we have the following:
		\begin{equation}\label{eq:Error-term-key-I}
			\sum_{\substack{\lambda\in\Ncpar\cup\Cpar\\\lambda_1\leq n-r}}d_{\lambda}\sum_{T\in\Ustd\cup\Ustdc}\mathcal{E}_T^{2\left(n-\frac{3}{2}\right)(\log n+c)}<\frac{\varepsilon}{8}\text{ for all }r\geq M_1.
		\end{equation}
		
		On the other hand, \cite[Lemma 3.7]{N-limit} ensures the existence of $M=M(c,\varepsilon)\geq M_1$ and sufficiently large $N=N(c,\varepsilon,M)>M$ such that
		\begin{equation}\label{eq:Error-term8}
			\sum_{\lambda\vdash n\;:\;\lambda_1\leq n-M}d_{\lambda}\left|\mathcal{C}_{\lambda}\right|^{\frac{n}{3}(\log n+c)}<\frac{\sqrt{\varepsilon}}{2\sqrt{2}}\quad\quad\text{ for all }n\geq N,
		\end{equation}
		thanks to Nestoridi and Olesker-Taylor for the delicate proof. Now,
		\begin{align}
			\sum_{\substack{\lambda\in\Ncpar\cup\Cpar\\\lambda_1\leq n-M}}d_{\lambda}\sum_{T\in\Ustd\cup\Ustdc}\mathcal{C}_{\lambda}^{\frac{2n}{3}(\log n+c)}<&\sum_{\substack{\lambda\in\Ncpar\cup\Cpar\\\lambda_1\leq n-M}}d_{\lambda}^2\mathcal{C}_{\lambda}^{\frac{2n}{3}(\log n+c)} \label{eq:Error-term9}\\
			\leq&\sum_{\lambda\vdash n\;:\;\lambda_1\leq n-M}d_{\lambda}^2\mathcal{C}_{\lambda}^{\frac{2n}{3}(\log n+c)} \label{eq:Error-term10}\\
			\leq&\left(\sum_{\substack{\lambda\vdash n\\\lambda_1\leq n-M}}d_{\lambda}\left|\mathcal{C}_{\lambda}\right|^{\frac{n}{3}(\log n+c)}\right)^2.\label{eq:Error-term10.A}
		\end{align}
		The inequality in \eqref{eq:Error-term9} follows from the fact that $|\Ustd\cup\Ustdc|=d_{\lambda}$ or $\frac{d_{\lambda}}{2}$. The inequality in \eqref{eq:Error-term10} holds because 
		$\displaystyle\sum_{\substack{\lambda\vdash n\;:\;\lambda_1\leq n-M\\\lambda\notin\Ncpar\cup\Cpar}}d_{\lambda}^2\mathcal{C}_{\lambda}^{\frac{2n}{3}(\log n+c)}\geq 0.$
		Therefore, \eqref{eq:Error-term8} and the expression in \eqref{eq:Error-term10.A} implies that
		\begin{equation}\label{eq:Error-term-key-II}
			\sum_{\substack{\lambda\in\Ncpar\cup\Cpar\\\lambda_1\leq n-M}}d_{\lambda}\sum_{T\in\Ustd\cup\Ustdc}\mathcal{C}_{\lambda}^{\frac{2n}{3}(\log n+c)}<\frac{\varepsilon}{8}\quad\quad\text{ for all }n\geq N.
		\end{equation}
		Finally, \eqref{eq:Error-term1} follows from \eqref{eq:Error-term-key-I} and \eqref{eq:Error-term-key-II}, by setting $r=M\geq M_1$. The proof finishes here.
	\end{proof}
    \begin{rem}
        We emphasize that the constant $M$ in \Cref{prop:Error-term} is sufficiently large and depends on $c$ and $\varepsilon$, but not on $n$. The same applies to the upcoming \Cref{prop:Main-term}, \Cref{lem:Error-term'}, and \Cref{lem:Main-term'}.
    \end{rem}
    
	Let $\lambda$ be a Young diagram with $n$ boxes (i.e., $\lambda\vdash n$). An \emph{inner corner} of $\lambda$ is a box whose removal from $\lambda$ results in a valid Young diagram with $n-1$ boxes. We write $\lambda^{\downarrow}$ to denote the set of all Young diagrams obtained by removing an inner corner from $\lambda$. Then, application of the \emph{hook-length formula} provides 
	\begin{equation}\label{eq:dimension-ineq}
		d_{\zeta}\leq\frac{4^{n-\lambda_1}}{n}d_{\lambda},\;\text{ for all }\zeta\in\lambda^{\downarrow}\text{ satisfying }\zeta_1=\lambda_1\;\; \text{(see \cite[Lemma 15]{N-star_limit})}.
	\end{equation}
	\begin{lem}\label{prop:Main-term}
		Given any $\varepsilon>0$ and $c\in\mathbb{R}$, recall $M$ from \Cref{prop:Error-term}. Then,
		\[\lim_{n\rightarrow\infty}\sum_{\substack{\lambda\in\Ncpar\cup\Cpar\\n-M\leq \lambda_1<n}}d_{\lambda}\sum_{T\in\Ustd\cup\Ustdc}\hspace*{-1ex}\left(\mathcal{C}_{\lambda}^{\lceil\frac{n}{3}(\log n+c)\rceil}\hspace*{-2ex}-\mathcal{E}_T^{\lceil\left(n-\frac{3}{2}\right)(\log n+c)\rceil}\right)^2\hspace*{-1ex}=0.\]
		The notations have the same meaning as they are given in \Cref{thm:TT2R_eigenvalues_irr} and \Cref{thm:3-cycle_eigenvalues_irr}.
	\end{lem}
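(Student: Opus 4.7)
The plan is to use the polynomial bound $d_\lambda=O_k(n^k)$ when $k=n-\lambda_1$ is fixed, together with matching second-order asymptotics of $\mathcal{C}_\lambda^{\lceil(n/3)(\log n+c)\rceil}$ and $\mathcal{E}_T^{\lceil(n-3/2)(\log n+c)\rceil}$ to the common leading term $e^{-kc}/n^k$, and to observe that the outer sum is over only $O_M(1)$ shapes. First I would parametrise each surviving $\lambda$ as $(n-k,\mu)$ with $k=n-\lambda_1\in\{1,\dots,M\}$ and $\mu\vdash k$; for each fixed $k$ there are only $p(k)$ such $\mu$, and for $n>2M+1$ no self-conjugate shape can satisfy $\lambda_1\geq n-M$ (since that would force $|\lambda|\geq 2(n-M)-1>n$), so the sum reduces to $O_M(1)$ partitions in $\Ncpar$. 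The hook-length formula and \eqref{eq:dimension-ineq} then give $d_\lambda\leq\binom{n}{k}d_\mu=O_k(n^k)$ and $|\Ustd\cup\Ustdc|\leq d_\lambda$.

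Next I would extract matching asymptotics for the two quantities. A Murnaghan--Nakayama computation applied to $\lambda=(n-k,\mu)$ identifies the leading contribution to $\chi^\lambda(1,2,3)/d_\lambda$ as $d_{(n-k-3,\mu)}/d_\lambda$, giving $\mathcal{C}_\lambda=1-3k/n+O_k(n^{-2})$, and hence
\[\mathcal{C}_\lambda^{\lceil(n/3)(\log n+c)\rceil}=\frac{e^{-kc}}{n^k}\bigl(1+O_k(n^{-1}\log n)\bigr).\]
For $\mathcal{E}_T$, Lemma~\ref{thm:TT2R_eigenvalues_irr} gives $|\mathcal{E}_T|=|c(b_T(n))+c(b_T(n-1))|/(2n-3)$. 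I would split $\Ustd\cup\Ustdc$ into \emph{main} tableaux -- those in which $n-1,n$ occupy the rightmost two boxes of the first row of a tableau of shape $\lambda$ (or, by conjugation, the bottom two boxes of the first column of a tableau of shape $\lambda'$) -- and \emph{off-main} tableaux. For main tableaux the content sum equals $2\lambda_1-3$, so $|\mathcal{E}_T|=1-2k/(2n-3)$ and
\[\mathcal{E}_T^{\lceil(n-3/2)(\log n+c)\rceil}=\frac{e^{-kc}}{n^k}\bigl(1+O_k(n^{-1}\log n)\bigr),\]
matching $\mathcal{C}_\lambda^{\lceil\cdot\rceil}$ to within $O_k(e^{-kc}\log n/n^{k+1})$. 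For off-main tableaux at least one of $n-1,n$ lies away from the extremal corner, forcing $|\mathcal{E}_T|\leq(2\lambda_1-4)/(2n-3)$ or smaller, so $\mathcal{E}_T^{2\lceil\cdot\rceil}=O(e^{-2kc}/n^{2k+1})$; a corner count via hook-length shows that there are at most $O_k(n^{k-1})$ off-main tableaux.

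Finally I would add the contributions per shape. The main tableaux contribute $d_\lambda\cdot d_\lambda\cdot O_k(\log^2 n/n^{2k+2})=O_k(\log^2 n/n^2)$. Using $(a-b)^2\leq 2(a^2+b^2)$, the off-main tableaux contribute $d_\lambda\cdot O_k(n^{k-1})\cdot\bigl(O_k(n^{-2k})+O_k(n^{-(2k+1)})\bigr)=O_k(n^{-1})$. Summing over the $O_M(1)$ shapes $\lambda$ leaves $o(1)$ as $n\to\infty$, which is the claim. The main technical obstacle is the second-order expansion for main tableaux: the leading $1/n^k$-order cancellation between the two exponentials is forced by the choice of cutoff times $(n/3)\log n$ and $(n-3/2)\log n$, but one must verify that the next-order correction in each exponential has size $O(\log n/n)$, since only then does the weighted squared difference contribute the $O_k(\log^2 n/n^2)$ needed to beat the dimension prefactor $d_\lambda^2\sim n^{2k}$.
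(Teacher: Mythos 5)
Your proposal is correct and follows essentially the same route as the paper: for each of the $O_M(1)$ surviving shapes you split the tableaux into the extremal (``main'') ones, whose exact eigenvalue $1-\tfrac{2(n-\lambda_1)}{2n-3}$ matches $\mathcal{C}_{\lambda}^{\lceil\frac{n}{3}(\log n+c)\rceil}$ up to a relative $O(\log n/n)$ so that the squared difference beats the $d_{\lambda}^2$ prefactor, and the remaining ones, whose eigenvalues are bounded away from the extremal value and whose number is $O(d_{\lambda}/n)$ via the corner bound \eqref{eq:dimension-ineq} --- exactly the paper's $\MT(\lambda)$/$\ET(\lambda)$ decomposition and estimates. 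Your extra touches (observing that no self-conjugate shape has $\lambda_1\geq n-M$ once $n>2M+1$, and deriving $\mathcal{C}_{\lambda}=1-\tfrac{3(n-\lambda_1)}{n}+O(n^{-2})$ by Murnaghan--Nakayama rather than citing \cite{N-limit}) are valid but only cosmetic variations on the paper's argument.
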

	\begin{proof}
		For $\lambda\vdash n$, we partition the set $\Ustd\cup\Ustdc$ into two subsets as follows:
		\begin{align*}
			\MT(\lambda)&=\{T\in\Ustd\cup\Ustdc:\text{ both }n-1\text{ and }n\text{ are in the first row of }T\}.\\
			\ET(\lambda)&=\{T\in\Ustd\cup\Ustdc:\;n-1\text{ or }n\text{ is below the first row of }T\} \\
			&=\Ustd\cup\Ustdc\setminus\MT(\lambda).
		\end{align*}
		Throughout this proof, we use $\mathcal{C}_{\lambda}=e^{-\frac{3}{n}(n-\lambda_1)}\left(1+O\left(\frac{3}{n^2}\right)\right)$ for all $\lambda\vdash n$ satisfying $n-\lambda_1\ll n$, from \cite[Corollary]{N-limit}. Thus, for sufficiently large $n$, using $1+O\left(\frac{3}{n^2}\right)\approx e^{O(3/n^2)}$, we have that
		\begin{equation}\label{eq:Main-term1}
			\mathcal{C}_{\lambda}^{\lceil\frac{n}{3}(\log n+c)\rceil}=e^{o(1)-(n-\lambda_1)(\log n+c)},
		\end{equation}
		for all $\lambda\in\Ncpar\cup\Cpar$ satisfying $n-M\leq\lambda_1<n$. Now,
		\begin{align}
			\mathcal{S}_1(n):=&\sum_{\substack{\lambda\in\Ncpar\cup\Cpar\\n-M\leq \lambda_1<n}}d_{\lambda}\sum_{T\in\MT(\lambda)}\left(\mathcal{C}_{\lambda}^{\lceil\frac{n}{3}(\log n+c)\rceil}-\mathcal{E}_T^{\lceil\left(n-\frac{3}{2}\right)(\log n+c)\rceil}\right)^2 \nonumber\\
			=&\sum_{\substack{\lambda\in\Ncpar\cup\Cpar\\n-M\leq \lambda_1<n}}d_{\lambda}\sum_{T\in\MT(\lambda)}\left(e^{o(1)-(n-\lambda_1)(\log n+c)}-\left(\frac{2\lambda_1-3}{2n-3}\right)^{\lceil\left(n-\frac{3}{2}\right)(\log n+c)\rceil}\right)^2.\label{eq:Main-term2}
		\end{align}
		The equality in \eqref{eq:Main-term2} follows from \eqref{eq:Main-term1} and \Cref{thm:TT2R_eigenvalues_irr}. Also, using
		\[\begin{array}{lr}
			|\MT(\lambda)|\leq\begin{cases}
				d_{\lambda}&\hspace*{-1.5ex}\text{ for non-self-conjugate }\lambda,\\
				\frac{d_{\lambda}}{2}&\hspace*{-1.5ex}\text{ for self-conjugate }\lambda,
			\end{cases}&\hspace*{-1.5ex}\text{ and }\;\left(\frac{2\lambda_1-3}{2n-3}\right)^{\lceil\left(n-\frac{3}{2}\right)(\log n+c)\rceil}\approx e^{-(n-\lambda_1)(\log n+c)},
		\end{array}\]
		the expression in the right hand side of \eqref{eq:Main-term2}, i.e., $\mathcal{S}_1(n)$ is less than 
		\[\displaystyle\sum_{\substack{\lambda\in\Ncpar\cup\Cpar\\n-M\leq \lambda_1<n}}d_{\lambda}^2e^{-2(n-\lambda_1)(\log n+c)}\left(e^{o(1)}-1\right)^2.\]
		Therefore, 
		$\displaystyle\sum_{\substack{\lambda\vdash n\;:\;n-M\leq \lambda_1<n\\\lambda\notin\Ncpar\cup\Cpar}}d_{\lambda}^2e^{-2(n-\lambda_1)(\log n+c)}\left(e^{o(1)}-1\right)^2\geq 0$
		implies that
		\begin{align}
			\mathcal{S}_1(n)<&\sum_{\lambda\vdash n\;:\;n-M\leq \lambda_1<n}d_{\lambda}^2e^{-2(n-\lambda_1)(\log n+c)}\left(e^{o(1)}-1\right)^2 \nonumber\\
			\leq&\sum_{\lambda_1=n-M}^{n-1}\binom{n}{\lambda_1}^2(n-\lambda_1)!\;e^{-2(n-\lambda_1)(\log n+c)}\left(e^{o(1)}-1\right)^2 \label{eq:Main-term3}\\
			\leq&\sum_{m=1}^{M}\frac{n^{2m}}{m!}\;e^{-2m(\log n+c)}\left(e^{o(1)}-1\right)^2,\;\text{ setting }m=n-\lambda_1\text{
			and using } \binom{n}{m}\leq\frac{n^m}{m!} \nonumber\\
			=&\sum_{m=1}^{M}\frac{\left(e^{-2c}\right)^m}{m!}\left(e^{o(1)}-1\right)^2<\sum_{m=1}^{\infty}\frac{\left(e^{-2c}\right)^m}{m!}\left(e^{o(1)}-1\right)^2=\left(e^{e^{-2c}}-1\right)\left(e^{o(1)}-1\right)^2\label{eq:Main-term4}.
		\end{align}
		The inequality in \eqref{eq:Main-term3} follows from the facts that $d_{\lambda}\leq\binom{n}{\lambda_1}d_{\xi}$ for all $\xi \vdash (n-\lambda_1)$ with $\xi_1\leq\lambda_1$ and $\displaystyle\sum_{\xi \, \vdash \,(n-\lambda_1)}d_{\xi}^2=(n-\lambda_1)!$.
        The expression in the right hand side of \eqref{eq:Main-term4} approaches to zero as $n\rightarrow\infty$. Therefore, using the non negativity of $\mathcal{S}_1(n)$ we have that 
		\begin{equation}\label{eq:Main-term-key-I}
			\lim_{n\rightarrow\infty}\mathcal{S}_1(n)=0.
		\end{equation}
		Again, using the triangle inequality of real numbers and \eqref{eq:Main-term1}, we have that
		\begin{align}
			\mathcal{S}_2(n):=&\sum_{\substack{\lambda\in\Ncpar\cup\Cpar\\n-M\leq \lambda_1<n}}d_{\lambda}\sum_{T\in\ET(\lambda)}\left(\mathcal{C}_{\lambda}^{\lceil\frac{n}{3}(\log n+c)\rceil}-\mathcal{E}_T^{\lceil\left(n-\frac{3}{2}\right)(\log n+c)\rceil}\right)^2 \nonumber\\
			\leq&\sum_{\substack{\lambda\in\Ncpar\cup\Cpar\\n-M\leq \lambda_1<n}}d_{\lambda}\sum_{T\in\ET(\lambda)}\left(e^{o(1)-(n-\lambda_1)(\log n+c)}+\left|\mathcal{E}_T\right|^{\lceil\left(n-\frac{3}{2}\right)(\log n+c)\rceil}\right)^2 \nonumber\\
			\leq&\sum_{\substack{\lambda\in\Ncpar\cup\Cpar\\n-M\leq \lambda_1<n}}d_{\lambda}\sum_{T\in\ET(\lambda)}\left(e^{o(1)-(n-\lambda_1)(\log n+c)}+\left|\frac{2\lambda_1-3}{2n-3}\right|^{\lceil\left(n-\frac{3}{2}\right)(\log n+c)\rceil}\right)^2.\label{eq:Main-term5}
		\end{align}
		The inequality in \eqref{eq:Main-term5} follows from \Cref{thm:TT2R_eigenvalues_irr} and
		\[c(b_T(n))+c(b_T(n-1))\leq 2\lambda_1-3,\;\text{ i.e., }\left|\mathcal{E}_T\right|\leq\left|\frac{2\lambda_1-3}{2n-3}\right|,\]
		for all $\lambda\in\Ncpar\cup\Cpar$. As $n$ is sufficiently large throughout this proof, we have that
		\[\left|\frac{2\lambda_1-3}{2n-3}\right|^{\lceil\left(n-\frac{3}{2}\right)(\log n+c)\rceil}=\left|1-\frac{n-\lambda_1}{n-\frac{3}{2}}\right|^{\lceil\left(n-\frac{3}{2}\right)(\log n+c)\rceil}\approx\;e^{-(n-\lambda_1)(\log n+c)}.\]
		For every $T\in\ET(\lambda),\;\lambda\in\Ncpar\cup\Cpar$ satisfying $n-M\leq\lambda_1<n$, at least one of $n-1$ or $n$ sits at an inner corner of $\lambda$ below the first row. Also, there could be at most $M$ many inner corner below the first row of $\lambda$. Therefore, using \eqref{eq:dimension-ineq}, we have that
		\[|\ET(\lambda)|\leq\begin{cases}
			\frac{M\times4^{n-\lambda_1}}{n}d_{\lambda}\leq \frac{M\times4^{M}}{n}d_{\lambda}&\text{ for non-self-conjugate }\lambda,\\
			\frac{M\times4^{n-\lambda_1}}{n}\frac{d_{\lambda}}{2}\leq \frac{M\times4^{M}}{n}\frac{d_{\lambda}}{2}&\text{ for self-conjugate }\lambda.
		\end{cases}\]
		Thus, the right hand side of  \eqref{eq:Main-term5}, and hence $\mathcal{S}_2(n)$ is less than or equal to
		\begin{align}
			&\sum_{\substack{\lambda\in\Ncpar\cup\Cpar\\n-M\leq \lambda_1<n}}\frac{M\times4^{M}}{n}\;d_{\lambda}^2\;e^{-2(n-\lambda_1)(\log n+c)}\left(e^{o(1)}+1\right)^2 \nonumber\\
			\leq&\sum_{\lambda\vdash n\;:\;n-M\leq \lambda_1<n}\frac{M\times4^{M}}{n}\;d_{\lambda}^2\;e^{-2(n-\lambda_1)(\log n+c)}\left(e^{o(1)}+1\right)^2 \label{eq:Main-term6}\\
			\leq&\sum_{\lambda_1=n-M}^{n-1}\frac{M\times4^{M}}{n}\;\binom{n}{\lambda_1}^2\;(n-\lambda_1)!\;e^{-2(n-\lambda_1)(\log n+c)}\left(e^{o(1)}+1\right)^2 \label{eq:Main-term7}\\
			=&\frac{M\times4^{M}}{n}\sum_{m=1}^{M}\frac{\prod_{i=0}^{m-1}(n-i)^2}{m!}\;e^{-2m(\log n+c)}\left(e^{o(1)}+1\right)^2,\;\;\text{ writing }m\text{ for }n-\lambda_1 \nonumber\\
			\leq&\frac{M\times4^{M}}{n}\sum_{m=1}^{M}\frac{n^{2m}}{m!}\;e^{-2m(\log n+c)}\left(e^{o(1)}+1\right)^2 \nonumber
		\end{align}
		\[=\frac{M\times4^{M}}{n}\sum_{m=1}^{M}\frac{\left(e^{-2c}\right)^m}{m!}\left(e^{o(1)}+1\right)^2<\frac{M\times4^{M}}{n}\left(e^{e^{-2c}}-1\right)\left(e^{o(1)}+1\right)^2.\]
		The inequality in \eqref{eq:Main-term6} follows from the fact that
		\[\sum_{\substack{\lambda\vdash n\;:\;n-M\leq \lambda_1<n\\\lambda\notin\Ncpar\cup\Cpar}}\frac{M\times4^{M}}{n}\;d_{\lambda}^2\;e^{-2(n-\lambda_1)(\log n+c)}\left(e^{o(1)}+1\right)^2\geq 0.\] 
		The inequality in \eqref{eq:Main-term7} follows from the facts that $d_{\lambda}\leq\binom{n}{\lambda_1}d_{\xi}$ for all $\xi\vdash (n-\lambda_1)$ with $\xi_1\leq\lambda_1$ and $\displaystyle\sum_{\xi\vdash (n-\lambda_1)}d_{\xi}^2=(n-\lambda_1)!$.
        Thus,
		\[0\leq \mathcal{S}_2(n)\leq\frac{M\times4^{M}}{n}\left(e^{e^{-2c}}-1\right)\left(e^{o(1)}+1\right)^2,
        \]
for all sufficiently large $n$, i.e.,
		\begin{equation}
        \label{eq:Main-term-key-II}
\lim_{n\rightarrow\infty}\mathcal{S}_2(n)=0.
		\end{equation}
		Therefore, the lemma follows from \eqref{eq:Main-term-key-I}, \eqref{eq:Main-term-key-II}, and the following:
		\begin{align*}
			0&\leq
			\sum_{\substack{\lambda\in\Ncpar\cup\Cpar\\n-M\leq \lambda_1<n}}d_{\lambda}\sum_{T\in\Ustd\cup\Ustdc}\left(\mathcal{C}_{\lambda}^{\lceil\frac{n}{3}(\log n+c)\rceil}-\mathcal{E}_T^{\lceil\left(n-\frac{3}{2}\right)(\log n+c)\rceil}\right)^2\\
			&\leq\mathcal{S}_1(n)+\mathcal{S}_2(n),\;\text{ for all sufficiently large }n.\qedhere
		\end{align*}
	\end{proof}
	Following the notations of \Cref{prop:Main-term}, we note down an immediate corollary as follows:
	\begin{cor}\label{cor:Main-term}
		There exists large enough positive integer $\bar{N}=\bar{N}(c,\varepsilon,M)>0$ such that,
		\[\sum_{\substack{\lambda\in\Ncpar\cup\Cpar\\n-M< \lambda_1<n}}d_{\lambda}\sum_{T\in\Ustd\cup\Ustdc}\left(\mathcal{C}_{\lambda}^{\lceil\frac{n}{3}(\log n+c)\rceil}-\mathcal{E}_T^{\lceil\left(n-\frac{3}{2}\right)(\log n+c)\rceil}\right)^2<\frac{\varepsilon}{2},\text{ for all }n\geq \bar{N}.\]
	\end{cor}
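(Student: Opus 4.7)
The plan is to derive \Cref{cor:Main-term} as a direct quantitative consequence of \Cref{prop:Main-term}. Observe first that the index set appearing in the corollary,
\[
\{\lambda\in\Ncpar\cup\Cpar\,:\,n-M<\lambda_1<n\},
\]
is a subset of the index set appearing in the lemma, namely $\{\lambda\in\Ncpar\cup\Cpar\,:\,n-M\leq\lambda_1<n\}$; the two differ only by the layer $\lambda_1=n-M$. Furthermore, every summand
\[
d_{\lambda}\left(\mathcal{C}_{\lambda}^{\lceil\frac{n}{3}(\log n+c)\rceil}-\mathcal{E}_T^{\lceil\left(n-\frac{3}{2}\right)(\log n+c)\rceil}\right)^2
\]
is manifestly non-negative, since $d_{\lambda}>0$ and each bracketed term is a square of a real number.

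Consequently the sum on the left-hand side of the corollary is bounded above by the sum appearing in \Cref{prop:Main-term}. My plan is to now apply the definition of the limit: given $\varepsilon>0$, since \Cref{prop:Main-term} asserts that the larger sum tends to $0$ as $n\to\infty$, there exists a threshold $\bar{N}=\bar{N}(c,\varepsilon,M)>0$ such that for every $n\geq\bar{N}$ the larger sum is strictly smaller than $\varepsilon/2$. Combining this with the monotonicity observation above yields the desired bound $\varepsilon/2$ for the smaller sum, which is precisely the content of \Cref{cor:Main-term}.

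There is no genuine obstacle to overcome here; indeed, the statement is labelled as an ``immediate corollary'' precisely because dropping the boundary layer $\lambda_1=n-M$ from the index set preserves non-negativity and hence preserves the upper bound. All of the analytic work, namely the estimates on $\mathcal{S}_1(n)$ and $\mathcal{S}_2(n)$ using the asymptotic $\mathcal{C}_{\lambda}=e^{-\frac{3}{n}(n-\lambda_1)}(1+O(3/n^2))$ together with the hook-length bound \eqref{eq:dimension-ineq}, has already been carried out in the proof of \Cref{prop:Main-term}. The corollary simply repackages the qualitative limit statement of the lemma in the quantitative $(\varepsilon,\bar{N})$-form that will be convenient when assembling the proof of \Cref{main-thm} in the remainder of \Cref{sec:main-proof}.
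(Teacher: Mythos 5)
Your argument is correct and matches the paper's intent exactly: the paper states this as an ``immediate corollary'' of \Cref{prop:Main-term} with no further proof, and your observations (the strict-inequality index set is contained in the lemma's index set, all summands are non-negative, and the limit being $0$ yields a threshold $\bar{N}$ below which the sum is less than $\varepsilon/2$) are precisely the routine details being left implicit. Nothing more is needed.
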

	We now write the proof of \Cref{main-thm} below.
	\begin{proof}[\textbf{Proof of \Cref{main-thm}}]
		Nestoridi and Olesker-Taylor \cite{N-limit} obtained the limit profile for the random walk on $A_n$ generated by all $3$-cycles. Their result shows
		\[\lim_{n\rightarrow\infty}\left\|Q^{*\lceil\frac{n}{3}\log n+cn\rceil}-U_{A_n}\right\|_{\text{TV}}=d_{\text{TV}}\left(\text{Poi}(1+e^{-c}),\text{Poi}(1)\right),\;c\in\mathbb{R}.\]
		Therefore, the theorem follows from \Cref{thm:comparison} and $\displaystyle\lim_{n\rightarrow\infty}\sum_{\rho\in\widehat{A_n}}\T_{\rho}=0$; where
		\begin{equation}
        \label{eq:key}
			\T_{\rho}:= d_{\rho} \Tr\left(\left(\widehat{Q}(\rho)\right)^{\lceil\frac{n}{3}(\log n+c)\rceil}-\left(\widehat{P}(\rho)\right)^{\lceil\left(n-\frac{3}{2}\right)(\log n+c)\rceil}\right)^2\text{ for }\rho\in\widehat{A_n}.
		\end{equation}
		Recall the irreducible representations of $A_n$ from \eqref{eq:irr-A_n-module}. They are given by
		\[\widehat{A_n}=\big\{D_{\lambda}:\lambda\in\Ncpar\big\}\bigcup\big\{D^+_{\lambda},D^-_{\lambda}:\lambda\in\Cpar\big\}.\]
		We now focus on each summand, given in \eqref{eq:key}, indexed by the irreducible representations on $A_n$. For $\lambda\in\Ncpar$, there is an irreducible representation of $A_n$, and
		\begin{align}\label{eq:key1}
			\T_{\lambda}=&d_{\lambda}\Tr\left(\mathcal{C}_{\lambda}^{\lceil\frac{n}{3}(\log n+c)\rceil}I_{d_{\lambda}}-\left(\widehat{P}(\lambda)\right)^{\lceil\left(n-\frac{3}{2}\right)(\log n+c)\rceil}\right)^2 \nonumber\\
			=&d_\lambda\sum_{T\in\Ustd\cup\Ustdc}\left(\mathcal{C}_{\lambda}^{\lceil\frac{n}{3}(\log n+c)\rceil}-\left(\mathcal{E}_T\right)^{\lceil\left(n-\frac{3}{2}\right)(\log n+c)\rceil}\right)^2,
		\end{align}
		by \Cref{thm:3-cycle_eigenvalues_irr} and  \Cref{thm:TT2R_eigenvalues_irr}. Also, for $\lambda\in\Cpar$, there are two irreducible representations $\lambda^{\pm}$ of $A_n$, and
		\begin{align*}
			\T_{\lambda^{\pm}}=&\frac{d_{\lambda}}{2}\Tr\left(\mathcal{C}_{\lambda}^{\lceil\frac{n}{3}(\log n+c)\rceil}I_{d_{\lambda/2}}-\left(\widehat{P}(\lambda^{\pm})\right)^{\lceil\left(n-\frac{3}{2}\right)(\log n+c)\rceil}\right)^2 \nonumber\\
			=&\frac{d_{\lambda}}{2}\sum_{T\in\Ustd\cup\Ustdc}\left(\mathcal{C}_{\lambda}^{\lceil\frac{n}{3}(\log n+c)\rceil}-\left(\mathcal{E}_T\right)^{\lceil\left(n-\frac{3}{2}\right)(\log n+c)\rceil}\right)^2.
		\end{align*}
		by \Cref{thm:3-cycle_eigenvalues_irr} and \Cref{thm:TT2R_eigenvalues_irr}. Therefore, we have that
		\begin{align}\label{eq:key2}
			&\sum_{\rho\in\{\lambda^+,\lambda^-\}}\T_{\rho}
			=&d_{\lambda}\sum_{T\in\Ustd\cup\Ustdc}\left(\mathcal{C}_{\lambda}^{\lceil\frac{n}{3}(\log n+c)\rceil}-\left(\mathcal{E}_T\right)^{\lceil\left(n-\frac{3}{2}\right)(\log n+c)\rceil}\right)^2.
		\end{align}for $\lambda\in\Cpar$. Thus, combining \eqref{eq:key1} and \eqref{eq:key2} we have that
		\begin{align*}
			0\leq&\sum_{\rho\in\widehat{A_n}}\T_{\rho}
			\leq\hspace*{-1ex}\sum_{\lambda\in\Ncpar\cup\Cpar}\hspace*{-1ex}d_{\lambda}\hspace*{-1ex}\sum_{T\in\Ustd\cup\Ustdc}\hspace*{-1ex}\left(\mathcal{C}_{\lambda}^{\lceil\frac{n}{3}(\log n+c)\rceil}\hspace*{-2ex}-\left(\mathcal{E}_T\right)^{\lceil\left(n-\frac{3}{2}\right)(\log n+c)\rceil}\right)^2\hspace*{-1ex}.\nonumber
		\end{align*}
		The summand corresponding to $\lambda=(n)$ is zero. Thus, \Cref{prop:Error-term} and \Cref{cor:Main-term} together imply
		$0\leq\displaystyle\sum_{\rho\in\widehat{A_n}}\T_{\rho}<\varepsilon$
		for all $n\geq\max\{N,\bar{N}\}$. This completes the proof.
	\end{proof}
    We conclude this section with the following question for further exploration:
    \begin{opqn}
      Determine if the following random walk models exhibit the cutoff phenomenon and, if so, derive their limiting profiles:
        \begin{enumerate}[left=0pt]
            \item The random walk generated by star $k$-cycles for $k=o(n)$, specifically the walks generated by $(1,2,\dots,k-1,i)$ and $(i,k-1,k-2,\dots,2)$ for $k\leq i\leq n$ and $k=o(n)$.
            \item The random walk generated by star conjugacy classes, for example,
            \begin{itemize}
                \item The random walk generated by $(1,2,3)(4,i)$ and $(4,i)(3,2,1)$ for $5\leq i\leq n$, or
                \item The random walk generated by $(1,2,i)(4,5)$ and $(4,5)(i,2,1)$ for $i=3,\;6\leq i\leq n$.
            \end{itemize}
        \end{enumerate} 
    \end{opqn}
\section{Spectrum of the alternating group graph}\label{sec:graph-spectrum}
 Huang and Huang obtained the second-largest eigenvalues of the \emph{alternating group graph, extended alternating group graph, complete alternating group graph} and asked the questions on the complete spectrum of them. The answer below for the alternating group graph follows directly from \Cref{thm:TT2R_eigenvalues_irr}. The notations in this section have the same meaning as they are in \Cref{sec:spectrum}.

 Recall that the alternating group graph $AG_n$ is the Cayley graph on $A_n$ with generating set $\{(1,i,2),(1,2,i):3\leq i\leq n\}$. Therefore, the adjacency matrix of $AG_n$ is given by the (right) multiplication action of the group algebra element
 \[\displaystyle\sum_{i=3}^{n}\left((1,2,i)+(1,i,2)\right)\in\mathbb{C}[A_n]\]
 on the group algebra $\mathbb{C}[A_n]$. But, the equality
 \begin{align*}
     \sum_{i=3}^{n}\left((1,2,i)+(1,i,2)\right)
     =&(1,n)(2,n-1)\left(\sum_{i=1}^{n-2}\left((n-1,n,i)+(n,n-1,i)\right)\right)(1,n)(2,n-1)
 \end{align*}
 implies that the adjacency matrix of $AG_n$ is similar to $(2n-3)\widehat{P}(R)-I_{\frac{n!}{2}}$. Here, $R$ is the right regular representation of the alternating group and $I_{\frac{n!}{2}}$ denote the identity matrix of size $\frac{n!}{2}\times\frac{n!}{2}$. Therefore, the full spectrum is immediate from \Cref{thm:TT2R_eigenvalues_irr}, and it is formally given below:
 
 \begin{thm}
		For $\lambda\in\Ncpar\cup\Cpar$, the eigenvalues of the adjacency matrix of $AG_n$ are indexed by the set $\Ustd\cup\Ustdc$. For non-self-conjugate $\lambda\vdash n$, each eigenvalue repeats $d_{\lambda}$ many times. For self-conjugate $\lambda\vdash n$, each eigenvalue repeats $\frac{d_{\lambda}}{2}$ many times. Suppose $\mathcal{E'}_T$ denote the eigenvalue indexed by $T\in \Ustd\cup\Ustdc$.
		\begin{itemize}
			\item If $n-1$  and $n$ appear in the same row of $T$, then $\mathcal{E'}_T=c(b_T(n))+c(b_T(n-1))-1$.
			\item If $n-1$  and $n$ appear in the same column of $T$, then $\mathcal{E'}_T=-c(b_T(n))-c(b_T(n-1))-1$.
			\item If $n-1$  and $n$ appear neither in the same row nor in the same row of $T$, then
			\[\begin{cases}
				\mathcal{E'}_T=c(b_T(n))+c(b_T(n-1))-1 \text{ and }\vspace*{1ex}\\
				\mathcal{E'}_S=-c(b_T(n))-c(b_T(n-1))-1.
			\end{cases}\] 
			Here, $S$ is the upper standard Young tableau obtained from $T$ by interchanging the positions of $n$ and $n-1$.
		\end{itemize}
	\end{thm}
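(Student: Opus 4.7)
The strategy is to leverage the similarity relation
\[A(AG_n) \sim (2n-3)\,\widehat{P}(R) - I_{n!/2}\]
established in the paragraph preceding the theorem. Since the right regular representation $R$ of $A_n$ decomposes according to \eqref{eq:Group_alg._decom.} as $\mathbb{C}[A_n] \cong \bigoplus_{\lambda \in \mathrm{NCPar}(n)} d_{\lambda} D_{\lambda} \;\oplus\; \bigoplus_{\lambda \in \mathrm{CPar}(n)} \bigl(d_{\lambda}^{+} D_{\lambda}^{+} \oplus d_{\lambda}^{-} D_{\lambda}^{-}\bigr)$, the operator $\widehat{P}(R)$ is block-diagonal with blocks $\widehat{P}(\lambda)$ (for $\lambda \in \mathrm{NCPar}(n)$) and $\widehat{P}(\lambda^{\pm})$ (for $\lambda \in \mathrm{CPar}(n)$), each appearing with the stated multiplicities.

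The plan is then simply to apply \Cref{thm:TT2R_eigenvalues_irr} blockwise: if $\mathcal{E}_T$ is the eigenvalue of $\widehat{P}(\lambda)$ (or $\widehat{P}(\lambda^{\pm})$) indexed by $T \in \mathrm{UStd}(\lambda) \cup \mathrm{UStd}(\lambda')$, then the corresponding eigenvalue of the adjacency matrix is
\[\mathcal{E}'_T = (2n-3)\,\mathcal{E}_T - 1.\]
A direct substitution into each of the three cases of \Cref{thm:TT2R_eigenvalues_irr} yields the formulas claimed in the statement: the denominator $2n-3$ cancels, leaving $\pm(c(b_T(n)) + c(b_T(n-1))) - 1$ depending on the relative positions of $n-1$ and $n$ in $T$.

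For the multiplicities, I would invoke \eqref{eq:Ustd-count}: for non-self-conjugate $\lambda$, the $d_{\lambda}$ eigenvalues of $\widehat{P}(\lambda)$ indexed by $\mathrm{UStd}(\lambda) \cup \mathrm{UStd}(\lambda')$ each contribute with multiplicity $d_{\lambda}$ (the multiplicity of $D_\lambda$ in $\mathbb{C}[A_n]$); for self-conjugate $\lambda$, the $d_\lambda/2$ eigenvalues of $\widehat{P}(\lambda^{\pm})$ indexed by $\mathrm{UStd}(\lambda)$ each contribute with multiplicity $d_{\lambda}/2$ from each of the isotypic components $D_\lambda^{\pm}$.

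There is essentially no obstacle here; the content is the similarity reduction already carried out in the text, together with a mechanical affine rescaling of the eigenvalues from \Cref{thm:TT2R_eigenvalues_irr}. The only care needed is to verify that conjugation by the fixed element $(1,n)(2,n-1) \in A_n$ indeed induces the asserted equality between the generating sets of $AG_n$ and of the transpose top-$2$ with random shuffle (modulo the $(2n-3)$ normalization and the lazy step), which is a direct computation $(1,n)(2,n-1)\,(n-1,n,i)\,(1,n)(2,n-1) = (1,2,\sigma(i))$ for an appropriate relabeling $\sigma$.
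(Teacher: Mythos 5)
Your proposal is correct and follows essentially the same route as the paper: the paper also uses the conjugation identity relating the generating set of $AG_n$ to that of the transpose top-$2$ shuffle, deduces that the adjacency matrix is similar to $(2n-3)\widehat{P}(R)-I_{n!/2}$, and reads off the spectrum and multiplicities from \Cref{thm:TT2R_eigenvalues_irr} together with the decomposition of the regular representation. The affine rescaling $\mathcal{E}'_T=(2n-3)\mathcal{E}_T-1$ and the multiplicity bookkeeping via \eqref{eq:Ustd-count} are exactly the steps the paper leaves implicit.
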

\section{Example when the transition matrices do not commute}\label{sec:example}
We consider the random walk on $A_n$ driven by the following probability measure:
\begin{equation}\label{eq:TT'R_defn}
	P'(\pi)=
	\begin{cases}
		\frac{1}{(n-1)^2}&\text{ if }\pi=(i,j,n)\text{ or }(j,i,n)\text{ for }1\leq i<j\leq n-1,\\
		\frac{1}{n-1}&\text{ if }\pi=\1,\text{ the identity permutation},\\
		0&\text{ otherwise}.
	\end{cases}
\end{equation}
The transition matrix of this random walk on $A_n$ is the action of $\frac{1}{(n-1)^2}P'_n\in\mathbb{C}[A_n]$, where
\[P'_n=(n-1)\1+\sum_{1\leq i<j\leq n-1}\left((i,j,n)+(j,i,n)\right),\]
on $A_n$ by multiplication on the right. Also, the transition matrix of the transpose top-$2$ with random shuffle is the action of $\frac{1}{2n-3}P_n\in\mathbb{C}[A_n]$, where
\[P_n=\1+\sum_{1\leq i\leq n-2}\left((i,n-1,n)+(i,n,n-1)\right),\]
on $A_n$ by multiplication on the right. It can be checked that $P_nP'_n\neq P'_nP_n$ for all $n\geq 4$. Therefore, the transition matrices $\widehat{P}(R)$ and $\widehat{P'}(R)$ are not simultaneously diagonalizable; where $R$ is the right regular representation of $A_n$. As a result, Nestoridi's comparison technique \cite[Lemma 1.4.]{N-star_limit} fails to compare the random walks on $A_n$ driven by the probability measures $P$ and $P'$. However, we can apply our comparison method (given in \Cref{thm:comparison}) to compare these two random walks. This is because, for each irreducible representation $\rho$ of $A_n$, the matrices $\widehat{P}(\rho)$ and $\widehat{P'}(\rho)$ are simultaneously block-diagonalizable with block size at most $2$.

We now write down the relation between $P_n$ and $P'_n$ with the \emph{Young-Jucys-Murphy} (YJM) elements of $A_n$ \cite{TT2R,Ruff}. The $i$th YJM element $J_i$ of $A_n$ is given by
\[J_i:=(1,2)\left((1,i)+(2,i)+\cdots+(i-1,i)\right),\quad i\geq 3,\]
with $J_1=0$ and $J_2=\1$ \cite[Definition 2.1]{TT2R}. It can be verified that $P_n'=J_n^2$ for all $n\geq 3$. Additionally, recall that 
\[P_n=(1,2)(n-1,n)\left(J_{n-1}+J_n\right),\]
holds for $n>3$ \cite[Lemma 2.1]{TT2R}. We also recall the set of all (non-isomorphic) irreducible $A_n$-modules:
\[\big\{D_{\lambda}:\lambda\in\Ncpar\big\}\bigcup\big\{D^+_{\lambda},D^-_{\lambda}:\lambda\in\Cpar\big\},\]
from \eqref{eq:irr-A_n-module}. We work with the basis used in the proof of \Cref{thm:TT2R_eigenvalues_irr} by the first author (for details, see \cite[Theorem 2.4 and Theorem 2.5]{TT2R}). For $\lambda\in\Ncpar$ (respectively, $\lambda\in\Cpar$) and $T\in\Ustd\cup\Ustdc$, let $v_T$ (respectively, $v_T^{\pm}$) be the basis vector of $D_{\lambda}$ (respectively, $D_{\lambda}^{\pm}$) indexed by $T$, and determined by $J_i(v_T)=c(b_T(i))v_T$ (respectively, $J_i(v_T^{\pm})=c(b_T(i))v_T^{\pm}$) for $1\leq i\leq n$.

For $\lambda\in\Ncpar$, we partition the basis into three subsets, as follows:
\begin{align*}
    &\mathcal{B}_1:=\{v_T: c(b_T(n))=c(b_T(n-1))+1\},\;\mathcal{B}_2:=\{v_T: c(b_T(n))=c(b_T(n-1))-1\},\text{ and }\\
    &\mathcal{B}_3:=\{v_T,(1,2)(n-1,n)v_{T}: c(b_T(n))\neq c(b_T(n-1))\pm 1\text{ and }c(b_T(n-1))<c(b_T(n))\}.
\end{align*}
From \cite[eq. (3)]{TT2R}, we recall that 
\begin{equation*}
    (1,2)(n-1,n)\cdot v_T=\begin{cases}
        v_T&\text{ if }v_T\in\mathcal{B}_1,\\
        -v_T&\text{ if }v_T\in\mathcal{B}_2.
    \end{cases}
\end{equation*}
Therefore, the vectors in $\mathcal{B}_1$ and $\mathcal{B}_2$ are common eigenvectors of both $P_n$ and $P_n'$, with the following eigenvalue relations:
\begin{align}
    &P_n(v_T)=(2c(b_T(n))-1)v_T\text{ and }P_n'(v_T)=\left(c(b_T(n))\right)^2v_T,\text{ if }v_T\in\mathcal{B}_1,\label{eq:eigvcomp1.0}\\
    &P_n(v_T)=-(2c(b_T(n))+1)v_T\text{ and }P_n'(v_T)=\left(c(b_T(n))\right)^2v_T,\text{ if }v_T\in\mathcal{B}_2\label{eq:eigvcomp1.1}.
\end{align}
For $T$ such that $c(b_T(n))\neq c(b_T(n-1))\pm 1$ and $c(b_T(n-1))<c(b_T(n))$, we have that
\[\text{Span}\{\{v_T,v_{(n-1,n)T}\}\}=\text{Span}\{\{v_T,(1,2)(n-1,n)v_T\}\}\;\text{(by \cite[eq. (3)]{TT2R}).}\]
Thus, $\mathcal{B}_3$ has even cardinality, and can be partitioned into subsets of size two, such that the span of each subset is invariant under the action of both $P_n$ and $P_n'$. More specifically, for $n>3$, we have the following:
\begin{align}
    [P_n]_{\{v_T,(1,2)(n-1,n)v_T\}}=&\begin{pmatrix}
    0&c(b_T(n-1))+c(b_T(n))\\&\\
    c(b_T(n-1))+c(b_T(n))&0
\end{pmatrix} \text{ and }\label{eq:eigvcomp2.0}\\&\nonumber\\
[P_n']_{\{v_T,(1,2)(n-1,n)v_T\}}=&\begin{pmatrix}
    \left(c(b_T(n))\right)^2&c(b_T(n))+c(b_T(n-1))\\&\\
    0&\left(c(b_T(n-1))\right)^2
\end{pmatrix}.\label{eq:eigvcomp2.1}
\end{align}

For $\lambda\in\Cpar$, by replacing $D_{\lambda},v_T$, and $\mathcal{B}_i (i=1,2,3)$ with $D^{\pm}_{\lambda},v^{\pm}_T$, and $\mathcal{B}^{\pm}_i (i=1,2,3)$, and using a simiar argument as above, we obtain
\begin{align}
    &P_n(v^{\pm}_T)=(2c(b_T(n))-1)v^{\pm}_T\text{ and }P_n'(v^{\pm}_T)=\left(c(b_T(n))\right)^2v^{\pm}_T,\text{ if }v^{\pm}_T\in\mathcal{B}^{\pm}_1,\label{eq:eigvcomp1.0_pm}\\
    &P_n(v^{\pm}_T)=-(2c(b_T(n))+1)v^{\pm}_T\text{ and }P_n'(v^{\pm}_T)=\left(c(b_T(n))\right)^2v^{\pm}_T,\text{ if }v^{\pm}_T\in\mathcal{B}^{\pm}_2\label{eq:eigvcomp1.1_pm}.
\end{align}
Also, for $T$ such that $c(b_T(n))\neq c(b_T(n-1))\pm 1$ and $c(b_T(n-1))<c(b_T(n))$, we have that
\begin{align}
    [P_n]_{\{v^{\pm}_T,(1,2)(n-1,n)v^{\pm}_T\}}=&\begin{pmatrix}
    0&c(b_T(n-1))+c(b_T(n))\\&\\
    c(b_T(n-1))+c(b_T(n))&0
\end{pmatrix} \text{ and }\label{eq:eigvcomp2.0_pm}\\&\nonumber\\
[P_n']_{\{v^{\pm}_T,(1,2)(n-1,n)v^{\pm}_T\}}=&\begin{pmatrix}
    \left(c(b_T(n))\right)^2&c(b_T(n))+c(b_T(n-1))\\&\\
    0&\left(c(b_T(n-1))\right)^2
\end{pmatrix}.\label{eq:eigvcomp2.1_pm}
\end{align}

We first prove the following lemma before comparing the two random walk models on $A_n$ driven by $P$ and $P'$.
\begin{lem}\label{lem:2x2matrix}
    For $a,b,a',b',\kappa\geq 0$, and positive integers $N_1$ and $N_2$, we have the following:
    \begin{equation}\label{eq:2x2matrix0}
    	\Tr\left(\begin{pmatrix}
    		0&a'+b'\\&\\
    		a'+b'&0
    	\end{pmatrix}^{N_1}\hspace{-2ex}-\hspace{-0.5ex}\begin{pmatrix}
    		a^2&\frac{1}{\kappa}(a+b)\\&\\
    		0&b^2
    	\end{pmatrix}^{N_2}\right)^2\leq a^{4N_2}+b^{4N_2}+2(a'+b')^{2N_1}.
    \end{equation}
\end{lem}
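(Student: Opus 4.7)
The plan is to compute both matrix powers explicitly and then evaluate $\Tr(D^2)$ where $D$ is the difference, using the identity $\Tr(D^2) = D_{11}^2 + D_{22}^2 + 2 D_{12} D_{21}$ for a $2\times 2$ matrix. Since the first matrix satisfies $M_1^2 = (a'+b')^2 I_2$, its $N_1$-th power has a clean form depending on the parity of $N_1$: it equals $(a'+b')^{N_1} I_2$ when $N_1$ is even, and it has zeros on the diagonal with off-diagonal entries $(a'+b')^{N_1}$ when $N_1$ is odd. The second matrix $M_2$ is upper triangular, so $M_2^{N_2}$ is upper triangular with diagonal entries $a^{2N_2}$ and $b^{2N_2}$ and a computable upper-right entry, which by standard induction equals $\tfrac{a+b}{\kappa} \cdot \tfrac{a^{2N_2}-b^{2N_2}}{a^2-b^2}$ when $a \neq b$ and $\tfrac{N_2 (a+b) a^{2(N_2-1)}}{\kappa}$ when $a = b$.

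Next I would split into the two parity cases for $N_1$. When $N_1$ is even, the off-diagonal contribution $2 D_{12} D_{21}$ vanishes because $D_{21} = 0$ (both matrices contribute zero in the bottom-left). Expanding $((a'+b')^{N_1} - a^{2N_2})^2 + ((a'+b')^{N_1} - b^{2N_2})^2$ yields
\[
2(a'+b')^{2N_1} + a^{4N_2} + b^{4N_2} - 2(a'+b')^{N_1}\bigl(a^{2N_2} + b^{2N_2}\bigr),
\]
and the last term is non-negative because $a, b, a', b' \geq 0$, so dropping it gives exactly the desired bound.

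When $N_1$ is odd, the diagonal of $M_1^{N_1}$ vanishes, giving $D_{11}^2 + D_{22}^2 = a^{4N_2} + b^{4N_2}$, while $D_{21} = (a'+b')^{N_1}$ and $D_{12} = (a'+b')^{N_1} - (M_2^{N_2})_{12}$. So the cross term becomes $2(a'+b')^{2N_1} - 2(a'+b')^{N_1} (M_2^{N_2})_{12}$. The result will follow once we verify that $(M_2^{N_2})_{12} \geq 0$. This is the one point that needs a small argument: rewriting $\tfrac{a^{2N_2}-b^{2N_2}}{a^2-b^2} = \sum_{i=0}^{N_2-1} a^{2(N_2-1-i)} b^{2i}$ (or using the $a=b$ formula) exhibits $(M_2^{N_2})_{12}$ as a sum of non-negative terms, using $a, b, \kappa \geq 0$. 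Subtracting the non-negative correction then yields the bound.

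The argument is essentially mechanical once the two powers are written down; the one place that requires a sign check (and hence the ``main obstacle'', though it is minor) is the non-negativity of the off-diagonal entry of $M_2^{N_2}$ in the odd-$N_1$ case, which relies crucially on the hypothesis $a, b, \kappa \geq 0$ in the statement. (The case $\kappa = 0$ would require $a+b=0$ to make sense of the matrix, i.e., $a = b = 0$, in which case both sides are trivially comparable.)
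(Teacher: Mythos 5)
Your proof is correct and follows essentially the same route as the paper: compute both matrix powers explicitly, split on the parity of $N_1$, and bound the trace by dropping a non-negative cross term whose sign hinges on the non-negativity of the upper-right entry of the triangular power. You are slightly more careful than the paper (treating $a=b$ separately and factoring $\tfrac{a^{2N_2}-b^{2N_2}}{a^2-b^2}$ to make the sign check explicit), but the argument is the same.
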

\begin{proof}
    The straightforward application of the principle of mathematical induction on $N_1$ and $N_2$ implies the following:
    \begin{align}
        \begin{pmatrix}
        0&a'+b'\\&\\
        a'+b'&0
    \end{pmatrix}^{N_1} &= (a'+b')^{N_1}\begin{pmatrix}
    \frac{1+(-1)^{N_1}}{2}&\frac{1-(-1)^{N_1}}{2}\\&\\
    \frac{1-(-1)^{N_1}}{2}&\frac{1+(-1)^{N_1}}{2}
\end{pmatrix}\label{eq:2x2matrix1}\\&\nonumber\\
    \begin{pmatrix}
        a^2&\frac{1}{\kappa}(a+b)\\&\\
        0&b^2
    \end{pmatrix}^{N_2}&=\begin{pmatrix}
        a^{2N_2}&\frac{b^{2N_2}-a^{2N_2}}{\kappa(b-a)}\\&\\
        0&b^{2N_2}
    \end{pmatrix}.\label{eq:2x2matrix2}
    \end{align}
    Therefore, for even $N_1$, the fact that the trace of the square of a matrix equals the sum of the squares of its eigenvalues implies that the expression in the left hand side of \eqref{eq:2x2matrix0} is equal to $\left((a'+b')^{N_1}-a^{2N_2}\right)^2+\left((a'+b')^{N_1}-b^{2N_2}\right)^2$. Thus, the lemma follows from
    \[\left((a'+b')^{N_1}-a^{2N_2}\right)^2+\left((a'+b')^{N_1}-b^{2N_2}\right)^2\leq a^{4N_2}+b^{4N_2}+2(a'+b')^{2N_1}.\]
    \indent
    For odd $N_1$, the lemma follows from the following
    \begin{align*}
        &\Tr\left(\begin{pmatrix}
        0&a'+b'\\&\\
        a'+b'&0
    \end{pmatrix}^{N_1}\hspace{-2ex}-\hspace{-0.5ex}\begin{pmatrix}
        a^2&\frac{1}{\kappa}(a+b)\\&\\
        0&b^2
    \end{pmatrix}^{N_2}\right)^2\\&\\
    =&\Tr\begin{pmatrix}
        a^{4N_2}+(a'+b')^{2N_1}-(a'+b')^{N_1}\cdot\frac{b^{2N_2}-a^{2N_2}}{\kappa(b-a)}&\clubsuit\\&\\
        \spadesuit&\hspace*{-5ex}b^{4N_2}+(a'+b')^{2N_1}-(a'+b')^{N_1}\cdot\frac{b^{2N_2}-a^{2N_2}}{\kappa(b-a)}
    \end{pmatrix}\\&\\
    =&a^{4N_2}+b^{4N_2}+2(a'+b')^{2N_1}-2(a'+b')^{N_1}\frac{b^{2N_2}-a^{2N_2}}{\kappa(b-a)}\leq\;a^{4N_2}+b^{4N_2}+2(a'+b')^{2N_1}.\qedhere
    \end{align*}
\end{proof}
We now make a guess for the candidate cutoff time and cutoff window for the random walk on $A_n$ driven by $P'$. 
To do so, we focus on the irreducible representation of $A_n$ indexed by $(n-1,1)$ (or equivalently $(2,1^{n-1})$)), and obtain the eigenvalues of $\widehat{P'}(D_{(n-1,1)})$. Let us denote the elements of $\text{UStd}((n-1,1))\cup\text{UStd}((2,1^{n-1}))$ as follows:
\[T_2:=\begin{array}{c}\young({{\substack{1}}}{{\substack{2}}},{{\substack{3}}},{{\substack{\vdots}}},{{\substack{n}}})\end{array},
\quad T_n:=\begin{array}{c}\young({{\substack{1}}}{{\substack{2}}}{{\substack{\cdots}}}{{\tiny{\substack{n-1}}}},{{\substack{n}}})\end{array},\text{ and }
T_i:=\begin{array}{c}\young({{\substack{1}}}{{\substack{2}}}{{\substack{\cdots}}}{{\substack{n}}},{{\substack{i}}})\end{array}\text{ for }2<i<n.\]
Now, consider the basis partition $\{v_{T_i}:2<i<n-1\},\{v_{T_2}\},\{v_{T_{n-1}},(1,2)(n-1,n)v_{T_{n-1}}\}$. 
Using equations \eqref{eq:eigvcomp1.0}, \eqref{eq:eigvcomp1.1}, and \eqref{eq:eigvcomp2.1}, we can deduce the eigenvalues of $\widehat{P'}(D_{(n-1,1)})$. These eigenvalues are given by:
\begin{itemize}
    \item $\left(\frac{n-2}{n-1}\right)^2$, with multiplicity $n-2$, and
    \item $\left(\frac{1}{n-1}\right)^2$, with multiplicity $1$.
\end{itemize}
Therefore, we have that
\[(n-2)\left(\frac{n-2}{n-1}\right)^{2k}+\left(\frac{1}{n-1}\right)^{2k}=(n-2)\left(1-\frac{1}{n-1}\right)^{2k}+\left(\frac{1}{n-1}\right)^{2k}\approx e^{-c},\]
for $k=\frac{1}{2}(n-1)(\log n+c)$. This indicates that $\frac{1}{2}(n-1)\log n$ would be a possible candidate for the cutoff time, with a window of order $n$.

We now use \Cref{thm:comparison} to compare the random walks on $A_n$ driven by $P$ and $P'$. To begin, let us introduce some notation that we will use throughout the rest of this section. We denote
\begin{equation}\label{eq:P-P'-component}
    \Tal_{\rho}:=\Tr\left(\left(\widehat{P}(\rho)\right)^{\left\lceil\left(n-\frac{3}{2}\right)(\log n+c)\right\rceil}-\left(\widehat{P'}(\rho)\right)^{\left\lceil\frac{1}{2}(n-1)(\log n+c)\right\rceil}\right)^2,\text{ for }\rho\in\widehat{A_n}.
\end{equation}
In particular, when $\rho=D_{\lambda},\;\lambda\in\Ncpar$, we simply write $\lambda$ in place of $D_{\lambda}$. Similarly, when $\rho=D^{\pm}_{\lambda},\;\lambda\in\Cpar$, we write $\lambda^{\pm}$ in place of $D^{\pm}_{\lambda}$. Now, we consider the following sum:
\begin{equation}\label{eq:P-P'-comp}
            \mathscr{S}um(n):=\sum_{\rho\in\widehat{A_n}}d_{\rho}
            \Tal_{\rho}=\hspace{-1ex}\sum_{\lambda\in\Ncpar}\hspace{-2ex}d_{\lambda}
            \Tal_{\lambda}+\hspace{-1ex}\sum_{\lambda\in\Cpar}\hspace{-2ex}\left(d_{\lambda^+}
            \Tal_{\lambda^+}+d_{\lambda^-}
            \Tal_{\lambda^-}\right),
\end{equation}
and prove the following lemma.
\begin{lem}\label{lem:one_summand-NC}
    Let $\lambda\in\Ncpar$, and set $\alpha:=\left(n-\frac{3}{2}\right)(\log n+c),\;\beta:=\frac{1}{2}(n-1)(\log n+c)$. Then, $\Tal_{\lambda}=\Tr\left(\left(\widehat{P}(\lambda)\right)^{\left\lceil\alpha\right\rceil}-\left(\widehat{P'}(\lambda)\right)^{\left\lceil\beta\right\rceil}\right)^2\nonumber$ is less than or equal to the following:
    \begin{align*}
        &\sum_{\substack{T\in\Ustd\cup\Ustdc\\c(b_T(n))=c(b_T(n-1))+1}}\hspace{-2ex}\left(\left(\frac{c(b_T(n))+c(b_T(n-1))}{2n-3}\right)^{\left\lceil\alpha\right\rceil}-\left(\frac{c(b_T(n))}{n-1}\right)^{2\left\lceil\beta\right\rceil}\right)^2\\
        &+\hspace{-1ex}\sum_{\substack{T\in\Ustd\cup\Ustdc\\c(b_T(n))=c(b_T(n-1))-1}}\hspace{-2ex}\left(\left(-\frac{c(b_T(n))+c(b_T(n-1))}{2n-3}\right)^{\left\lceil\alpha\right\rceil}-\left(\frac{c(b_T(n))}{n-1}\right)^{2\left\lceil\beta\right\rceil}\right)^2\nonumber\\
        &+\hspace{-1ex}\sum_{\substack{T\in\Ustd\cup\Ustdc\\c(b_T(n))\neq c(b_T(n-1))\pm1\\c(b_T(n-1))<c(b_T(n))}}\hspace{-2ex}\left(\left(\frac{c(b_T(n))}{n-1}\right)^{4\left\lceil\beta\right\rceil}\hspace{-2ex}+\left(\frac{c(b_T(n-1))}{n-1}\right)^{4\left\lceil\beta\right\rceil}\hspace{-2ex}+2\left(\frac{c(b_T(n))+c(b_T(n-1))}{2n-3}\right)^{2\left\lceil\alpha\right\rceil}\right).
    \end{align*}
    Moreover, for $\lambda\in\Cpar$, the same conclusion holds with $\lambda$ replaced by $\lambda^{\pm}$.
\end{lem}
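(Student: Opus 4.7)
The plan is to exploit the common block decomposition of $\widehat{P}(\lambda)$ and $\widehat{P'}(\lambda)$ with respect to the Young--Jucys--Murphy--adapted basis $\{v_T : T \in \Ustd\cup\Ustdc\}$ of $D_\lambda$ introduced above. Writing this basis as $\mathcal{B}_1\cup\mathcal{B}_2\cup\mathcal{B}_3$ as in the preceding discussion and recalling $\widehat{P}(\lambda)=\frac{1}{2n-3}P_n$, $\widehat{P'}(\lambda)=\frac{1}{(n-1)^2}P'_n$, equations \eqref{eq:eigvcomp1.0}--\eqref{eq:eigvcomp2.1} show that both operators are simultaneously block-diagonal: each vector in $\mathcal{B}_1\cup\mathcal{B}_2$ spans a one-dimensional common invariant subspace, and each pair $\{v_T,(1,2)(n-1,n)v_T\}$ in $\mathcal{B}_3$ spans a two-dimensional common invariant subspace. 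Therefore $\widehat{P}(\lambda)^{\lceil\alpha\rceil}-\widehat{P'}(\lambda)^{\lceil\beta\rceil}$ is block-diagonal with the same block pattern, and the trace of its square splits as a sum of block-by-block contributions.

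For $v_T\in\mathcal{B}_1$, where $c(b_T(n))=c(b_T(n-1))+1$, the common eigenvalues are $\frac{c(b_T(n))+c(b_T(n-1))}{2n-3}$ and $\bigl(\frac{c(b_T(n))}{n-1}\bigr)^2$ (using $2c(b_T(n))-1=c(b_T(n))+c(b_T(n-1))$); for $v_T\in\mathcal{B}_2$ the eigenvalue of $\widehat{P}(\lambda)$ picks up a minus sign. Raising to the appropriate powers and squaring the difference reproduces precisely the first two sums in the claimed bound.

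For a pair in $\mathcal{B}_3$, I set $a=\frac{c(b_T(n))}{n-1}$, $b=\frac{c(b_T(n-1))}{n-1}$, $a'+b'=\frac{c(b_T(n))+c(b_T(n-1))}{2n-3}$, and $\kappa=n-1$; then the $2\times 2$ blocks $\bigl(\begin{smallmatrix}0 & a'+b' \\ a'+b' & 0\end{smallmatrix}\bigr)$ and $\bigl(\begin{smallmatrix}a^2 & (a+b)/\kappa \\ 0 & b^2\end{smallmatrix}\bigr)$ of $\widehat{P}(\lambda)$ and $\widehat{P'}(\lambda)$ are exactly the matrices appearing in \Cref{lem:2x2matrix}. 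Applying that lemma with $N_1=\lceil\alpha\rceil$ and $N_2=\lceil\beta\rceil$ bounds the block's trace contribution by $a^{4N_2}+b^{4N_2}+2(a'+b')^{2N_1}$, which after summing over $\mathcal{B}_3$ gives the third sum in the claimed inequality.

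The main obstacle I anticipate is that the contents $c(b_T(n))$ and $c(b_T(n-1))$ may be negative, whereas \Cref{lem:2x2matrix} is stated for non-negative parameters. Inspecting its proof shows that non-negativity is only used in the odd-$N_1$ case to guarantee $X(a'+b')^{N_1}\geq 0$, where $X=\frac{a^{2N_2}-b^{2N_2}}{\kappa(a-b)}=\frac{1}{\kappa}(a+b)\sum_{k=0}^{N_2-1}(a^2)^{N_2-1-k}(b^2)^k$ has the same sign as $a+b$ (since $\kappa=n-1>0$). In our setting $a+b$ and $a'+b'$ are positive scalar multiples of the same integer $c(b_T(n))+c(b_T(n-1))$, so $X(a'+b')^{N_1}$ is a positive multiple of $(a'+b')^{N_1+1}$, which is non-negative because $N_1+1$ is even. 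Hence the bound of \Cref{lem:2x2matrix} remains valid in our setting. Finally, the identical argument applies to $D_\lambda^{\pm}$ when $\lambda\in\Cpar$, using the basis $\{v_T^{\pm}\}$ together with the parallel block relations \eqref{eq:eigvcomp1.0_pm}--\eqref{eq:eigvcomp2.1_pm}, which handles the self-conjugate case and completes the proof.
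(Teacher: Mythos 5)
Your proposal is correct and follows essentially the same route as the paper's proof: decompose $D_\lambda$ (resp.\ $D_\lambda^{\pm}$) into the common invariant blocks given by $\mathcal{B}_1,\mathcal{B}_2,\mathcal{B}_3$ via \eqref{eq:eigvcomp1.0}--\eqref{eq:eigvcomp2.1} (and their $\pm$ analogues), read off the $1\times1$ contributions, and apply \Cref{lem:2x2matrix} with $N_1=\lceil\alpha\rceil$, $N_2=\lceil\beta\rceil$, $\kappa=n-1$ to the $2\times2$ blocks. Your extra check that the bound of \Cref{lem:2x2matrix} persists when the contents (hence $a,b,a'+b'$) are negative—since $a+b$ and $a'+b'$ share a sign and $N_1+1$ is even in the odd-$N_1$ case—is a valid refinement of a point the paper's application leaves implicit.
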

\begin{proof}
    Let us denote $a'_T:=\frac{c(b_T(n))}{2n-3}, b'_T:=\frac{c(b_T(n-1))}{2n-3}, a_T:=\frac{c(b_T(n))}{(n-1)}$, and $b_T:=\frac{c(b_T(n-1))}{(n-1)}$. Then, using equations \eqref{eq:eigvcomp1.0},\eqref{eq:eigvcomp1.1}, \eqref{eq:eigvcomp2.0}, and \eqref{eq:eigvcomp2.1}, we can write $\Tal_{\lambda}$ as follows:
   \begin{align*}
   &\sum_{\substack{T\in\Ustd\cup\Ustdc\\c(b_T(n))=c(b_T(n-1))+1}}\hspace{-2ex}\left(\left(a'_T+b'_T\right)^{\left\lceil\alpha\right\rceil}-\left(a_T\right)^{2\left\lceil\beta\right\rceil}\right)^2+\hspace{-1ex}\sum_{\substack{T\in\Ustd\cup\Ustdc\\c(b_T(n))=c(b_T(n-1))-1}}\hspace{-2ex}\left(\left(-a'_T-b'_T\right)^{\left\lceil\alpha\right\rceil}-\left(a_T\right)^{2\left\lceil\beta\right\rceil}\right)^2\nonumber\\
    &\quad+\sum_{\substack{T\in\Ustd\cup\Ustdc\\c(b_T(n))\neq c(b_T(n-1))\pm1\\c(b_T(n-1))<c(b_T(n))}}\Tr\left(
    \begin{pmatrix}
        0&a'_T+b'_T\\&\\
        a'_T+b'_T&0
    \end{pmatrix}^{\left\lceil\alpha\right\rceil}-
    \begin{pmatrix}
        a^2_T&\frac{1}{(n-1)}(a_T+b_T)\\&\\
        0&b^2_T
    \end{pmatrix}^{\left\lceil\beta\right\rceil}\right)^2.
    \end{align*}
    Thus, the lemma follows directly from \Cref{lem:2x2matrix}.\\
    \indent
    Moreover, for $\lambda\in\Cpar$, the same conclusion holds by substituting $\lambda$ with $\lambda^{\pm}$, and using the equations \eqref{eq:eigvcomp1.0_pm},\eqref{eq:eigvcomp1.1_pm}, \eqref{eq:eigvcomp2.0_pm}, \eqref{eq:eigvcomp2.1_pm}, along with \Cref{lem:2x2matrix}.
    \end{proof}
\begin{lem}\label{lem:Error-term'}
    Given any $\varepsilon>0$ and $c\in\mathbb{R}$, there exist constants $M=M(c,\varepsilon)>0$ and sufficiently large $N=N(c,\varepsilon,M)>M$ such that for all $n\geq N$, we have the following bound on $\mathscr{S}um_1(n)$:
    \[\mathscr{S}um_1(n)=\sum_{\substack{\lambda\in\Ncpar\\\lambda_1\leq n-M}}d_{\lambda}\Tal_{\lambda}+\sum_{\substack{\lambda\in\Cpar\\\lambda_1\leq n-M}}d^+_{\lambda}\Tal_{\lambda^+}+\sum_{\substack{\lambda\in\Cpar\\\lambda_1\leq n-M}}d^-_{\lambda}\Tal_{\lambda^-}<\frac{\varepsilon}{2}.\]
	\end{lem}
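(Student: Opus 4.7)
The plan is to apply \Cref{lem:one_summand-NC} to each summand $\Tal_\lambda$ (for $\lambda\in\Ncpar$) and $\Tal_{\lambda^{\pm}}$ (for $\lambda\in\Cpar$) appearing in $\mathscr{S}um_1(n)$. On the first two sums appearing in that lemma, I apply the elementary inequality $(x-y)^2\leq 2(x^2+y^2)$ to decompose each squared difference into two non-negative pieces. Combined with the third (already non-negative) sum in that lemma, this yields a bound on $\Tal_\lambda$ consisting of two families of terms: (i) quantities of the form $\left|\tfrac{c(b_T(n))+c(b_T(n-1))}{2n-3}\right|^{2\lceil\alpha\rceil}$ coming from $\widehat{P}(\lambda)$; and (ii) quantities of the form $\left|\tfrac{c(b_T(n))}{n-1}\right|^{4\lceil\beta\rceil}$ or $\left|\tfrac{c(b_T(n-1))}{n-1}\right|^{4\lceil\beta\rceil}$ coming from $\widehat{P'}(\lambda)$.

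Next, I use the fact that for every $\lambda\in\Ncpar\cup\Cpar$ one has $\lambda_1\geq r_\lambda$ (the number of rows), so $|c(b_T(i))|\leq \lambda_1-1$ for every box, and in particular $c(b_T(n))+c(b_T(n-1))\leq 2\lambda_1-3$. Consequently, contributions of type (i) are bounded by $\left(1-\tfrac{2(n-\lambda_1)}{2n-3}\right)^{2\lceil\alpha\rceil}$, and since $2\lceil\alpha\rceil\geq (2n-3)(\log n+c)$, the inequality $1-x\leq e^{-x}$ gives the bound $e^{-2(n-\lambda_1)(\log n+c)}$. Contributions of type (ii) are bounded by $\left(1-\tfrac{n-\lambda_1}{n-1}\right)^{4\lceil\beta\rceil}$, and since $4\lceil\beta\rceil\geq 2(n-1)(\log n+c)$, the same inequality gives $e^{-2(n-\lambda_1)(\log n+c)}$. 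Thus every term is bounded by $e^{-2(n-\lambda_1)(\log n+c)}$ up to a universal multiplicative constant.

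Using $|\Ustd\cup\Ustdc|\leq d_\lambda$, and extending the outer sum from $\Ncpar\cup\Cpar$ to all $\lambda\vdash n$ with $\lambda_1\leq n-M$ (which only enlarges the right-hand side by non-negativity), I obtain
\[
\mathscr{S}um_1(n)\;\leq\;C\sum_{\lambda\vdash n:\,\lambda_1\leq n-M}d_\lambda^{2}\,e^{-2(n-\lambda_1)(\log n+c)}
\]
for some absolute constant $C$. At this point the argument is identical to the calculation in the proof of \Cref{prop:Error-term}: invoking $d_\lambda\leq \binom{n}{\lambda_1}d_\xi$ for $\xi\vdash n-\lambda_1$ with $\xi_1\leq\lambda_1$ together with $\sum_{\xi\vdash(n-\lambda_1)}d_\xi^2=(n-\lambda_1)!$, and $\binom{n}{m}\leq n^m/m!$ with the substitution $m=n-\lambda_1$, reduces the bound to $C\sum_{m\geq M}\tfrac{e^{-2mc}}{m!}$, which can be forced below $\varepsilon/2$ by choosing $M=M(c,\varepsilon)$ sufficiently large.

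The main obstacle is a bookkeeping matching between the exponents: the two walks run for times of different orders ($\alpha\asymp n\log n$ vs.\ $\beta\asymp \tfrac{n}{2}\log n$), and the eigenvalues of $\widehat{P}$ and $\widehat{P'}$ are normalized differently ($(2n-3)^{-1}$ vs.\ $(n-1)^{-1}$). The key observation that makes everything work out is the algebraic coincidence $2(n-\tfrac{3}{2})=2n-3$ and $4\cdot\tfrac{1}{2}(n-1)=2(n-1)$, so that after squaring the eigenvalues coming from $\widehat{P'}$ and passing to contents, both families of contributions produce the same exponential decay rate $e^{-2(n-\lambda_1)(\log n+c)}$. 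Any small discrepancies (from the ceiling functions and the gap between $2n-3$ and $2(n-1)$) are absorbed into the constant $C$.
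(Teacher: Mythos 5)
Your proposal is correct and follows essentially the same route as the paper's own proof: apply \Cref{lem:one_summand-NC}, use $(x-y)^2\le 2(x^2+y^2)$, bound all contents by $\lambda_1-1$ (so eigenvalue magnitudes by $\tfrac{2\lambda_1-3}{2n-3}$ and $\tfrac{\lambda_1-1}{n-1}$), note that the exponents $2\alpha=(2n-3)(\log n+c)$ and $4\beta=2(n-1)(\log n+c)$ produce the common decay $e^{-2(n-\lambda_1)(\log n+c)}$ via $1-x\le e^{-x}$, and then repeat the dimension-counting estimate from \Cref{prop:Error-term} to reduce to the tail $\sum_{m\ge M}e^{-2mc}/m!$. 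The only cosmetic difference is that the paper tracks the explicit constant $4$ rather than an unspecified $C$, which does not affect the argument.
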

    \begin{proof}
        The proof is similar to that of \Cref{prop:Error-term}. For $\lambda\in\Ncpar\cup\Cpar$ and $T\in\Ustd\cup\Ustdc$, we have $\left|\frac{c(b_T(n))+c(b_T(n-1))}{2n-3}\right|\leq \frac{2\lambda_1-3}{2n-3}\leq 1,\;\left|\frac{c(b_T(n-1))}{n-1}\right|\leq \frac{\lambda_1-1}{n-1}\leq 1$, and $\left|\frac{c(b_T(n))}{n-1}\right|\leq \frac{\lambda_1-1}{n-1}\leq 1$. Recall $\alpha:=\left(n-\frac{3}{2}\right)(\log n+c)$ and $\beta:=\frac{1}{2}(n-1)(\log n+c)$. Then, for 
    \[\rho\in\{D_{\lambda}:\lambda\in\Ncpar\}\bigcup\{D^-_{\lambda},D^+_{\lambda}:\lambda\in\Cpar\},\]
    using $(a-b)^2\leq 2(a^2+b^2)$ for $a,b\in\mathbb{R}$, \Cref{lem:one_summand-NC} implies 
    \begin{align}
    \Tal_{\rho}\leq&\;2\sum_{\substack{T\in\Ustd\cup\Ustdc\\c(b_T(n))=c(b_T(n-1))\pm1}}\left(\left(\frac{2\lambda_1-3}{2n-3}\right)^{2\left\lceil\alpha\right\rceil}+\left(\frac{\lambda_1-1}{n-1}\right)^{4\left\lceil\beta\right\rceil}\right)\nonumber\\
    &\quad\quad+\sum_{\substack{T\in\Ustd\cup\Ustdc\\c(b_T(n))\neq c(b_T(n-1))\pm1\\c(b_T(n-1))<c(b_T(n))}}2\left(\left(\frac{\lambda_1-1}{n-1}\right)^{4\left\lceil\beta\right\rceil}+\left(\frac{2\lambda_1-3}{2n-3}\right)^{2\left\lceil\alpha\right\rceil}\right) \nonumber\\
    \leq&\;2\sum_{T\in\Ustd\cup\Ustdc}\left(\left(\frac{2\lambda_1-3}{2n-3}\right)^{2\left\lceil\alpha\right\rceil}+\left(\frac{\lambda_1-1}{n-1}\right)^{4\left\lceil\beta\right\rceil}\right) \label{eq:Error-term'2.2}\\
    \leq &\;2\sum_{T\in\Ustd\cup\Ustdc}\left(\left(\frac{2\lambda_1-3}{2n-3}\right)^{2\alpha}+\left(\frac{\lambda_1-1}{n-1}\right)^{4\beta}\right),\text{ using }\alpha\leq\lceil\alpha\rceil\text{ and }\beta\leq\lceil\beta\rceil \nonumber\\
    \leq &\;2\sum_{T\in\Ustd\cup\Ustdc}\left(e^{-\frac{4\alpha(n-\lambda_1)}{2n-3}}+e^{-\frac{4\beta(n-\lambda_1)}{n-1}}\right),\text{ using }1-x\leq e^{-x}\text{ for all }x\geq 0 \nonumber\\
    \leq &\;2\sum_{T\in\Ustd\cup\Ustdc}2e^{-2(n-\lambda_1)(\log n+c)},\text{ writing the values of }\alpha\text{ and }\beta \nonumber\\
    =&\begin{cases}
        d_{\lambda}\times 4e^{-2(n-\lambda_1)(\log n+c)}&\text{ if }\rho=D_{\lambda},\\
        d^{\pm}_{\lambda}\times 4e^{-2(n-\lambda_1)(\log n+c)}&\text{ if }\rho=D^{\pm}_{\lambda}.
    \end{cases}\label{eq:Error-term'2.3}
    \end{align}
    The inequality in \eqref{eq:Error-term'2.2} follows from $\left(\frac{2\lambda_1-3}{2n-3}\right)^2\leq 1,\left(\frac{\lambda_1-1}{n-1}\right)^4\leq 1,\alpha\leq\lceil\alpha\rceil,\beta\leq\lceil\beta\rceil$, and the following facts:
    \begin{itemize}
        \item $\Ustd\cup\Ustdc$ is a disjoint union of $\{T:c(b_T(n))= c(b_T(n-1))\pm1\}$ and $\{T:c(b_T(n))\neq c(b_T(n-1))\pm1\}$.
        \item $\left|\Big\{T:c(b_T(n))\neq c(b_T(n-1))\pm1\Big\}\right|=2\left|\Big\{T:
        \begin{minipage}{2in}
            $c(b_T(n))\neq c(b_T(n-1))\pm1,\\c(b_T(n-1))< c(b_T(n))$
        \end{minipage}
    \Big\}\right|$.
    \end{itemize}
    We rewrite the following from \eqref{eq:Error-term'2.3}:
    \[\Tal_{\rho}=\Tr\left(\left(\widehat{P}(\rho)\right)^{\left\lceil\alpha\right\rceil}-\left(\widehat{P'}(\rho)\right)^{\left\lceil\beta\right\rceil}\right)^2\leq \begin{cases}
        d_{\lambda}\times 4e^{-2(n-\lambda_1)(\log n+c)}&\text{ if }\rho=D_{\lambda},\\
        d^{\pm}_{\lambda}\times 4e^{-2(n-\lambda_1)(\log n+c)}&\text{ if }\rho=D^{\pm}_{\lambda}.
    \end{cases}\]
    Now, choose a sufficiently large positive integer $M=M(c,\varepsilon)$ such that $\displaystyle\sum_{m\geq M}\frac{e^{-2mc}}{m!}<\frac{\varepsilon}{8}$. Then, for $n\geq N=M+1$, we have that
    \begin{align}
        \mathscr{S}um_1(n)\leq &\sum_{\substack{\lambda\in\Ncpar\\\lambda_1\leq n-M}}4d_{\lambda}^2e^{-2(n-\lambda_1)(\log n+c)}+\sum_{\substack{\lambda\in\Cpar\\\lambda_1\leq n-M}}4\left((d^+_{\lambda})^2+(d^-_{\lambda})^2\right)e^{-2(n-\lambda_1)(\log n+c)}\nonumber\\
        <&\sum_{\substack{\lambda\in\Ncpar\\\lambda_1\leq n-M}}4d_{\lambda}^2e^{-2(n-\lambda_1)(\log n+c)}+\sum_{\substack{\lambda\in\Cpar\\\lambda_1\leq n-M}}4d_{\lambda}^2e^{-2(n-\lambda_1)(\log n+c)}\label{eq:Error-term'3.1}\\
        \leq&\sum_{\lambda\vdash n:\;\lambda_1\leq n-M}4d_{\lambda}^2e^{-2(n-\lambda_1)(\log n+c)}\label{eq:Error-term'3.2}\\
        \leq\;&4\sum_{m=M}^{n-1}\frac{e^{-2mc}}{m!}<4\sum_{m\geq M}\frac{e^{-2mc}}{m!}<\frac{\varepsilon}{2}.\label{eq:Error-term'3.3}
    \end{align}
    The inequality in \eqref{eq:Error-term'3.1} follows from the fact that $(d^+_{\lambda})^2+(d^-_{\lambda})^2=2\left(\frac{d_{\lambda}}{2}\right)^2<d_{\lambda}^2$. The inequality in \eqref{eq:Error-term'3.2} follows from the fact that $\displaystyle\sum_{\substack{\lambda\notin\Ncpar\cup\Cpar\\\lambda_1\leq n-M}}4d_{\lambda}^2e^{-2(n-\lambda_1)(\log n+c)}\geq 0$. Finally, the first inequality in \eqref{eq:Error-term'3.3} follows from the same reasoning as in \eqref{eq:Error-term5} -- \eqref{eq:Error-term7}. This completes the proof.
    \end{proof}
    \begin{lem}\label{lem:Main-term'}
		Given any $\varepsilon>0$ and $c\in\mathbb{R}$, recall $M$ from \Cref{lem:Error-term'}. Then, we have the following: \[\displaystyle\lim_{n\rightarrow\infty}\mathscr{S}um_2(n)=0,\]
	\[
   \text{ where }     \mathscr{S}um_2(n)=\sum_{\substack{\lambda\in\Ncpar\\n-M\leq\lambda_1< n}}d_{\lambda}\Tal_{\lambda}+\sum_{\substack{\lambda\in\Cpar\\n-M\leq\lambda_1< n}}d^+_{\lambda}\Tal_{\lambda^+}+\sum_{\substack{\lambda\in\Cpar\\n-M\leq\lambda_1< n}}d^-_{\lambda}\Tal_{\lambda^-}.
	\]
	\end{lem}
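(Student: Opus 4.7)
The plan is to mirror the proof of \Cref{prop:Main-term}, using \Cref{lem:one_summand-NC} as the key input in place of the explicit eigenvalue formulas of \Cref{thm:TT2R_eigenvalues_irr}. For each $\lambda\in\Ncpar\cup\Cpar$ with $n-M\leq\lambda_1<n$, I partition $\Ustd\cup\Ustdc$ into $\MT(\lambda)$ (both $n-1$ and $n$ in the first row of $T$) and its complement $\ET(\lambda)$, apply \Cref{lem:one_summand-NC} to bound $\Tal_{\lambda}$ (respectively $\Tal_{\lambda^{\pm}}$), and decompose $\mathscr{S}um_2(n)\leq\mathcal{T}_1(n)+\mathcal{T}_2(n)$, where $\mathcal{T}_1$ and $\mathcal{T}_2$ collect the contributions from $\MT$ and $\ET$. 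Observe that for each self-conjugate $\lambda$ one has $d_{\lambda}^{+}\Tal_{\lambda^{+}}+d_{\lambda}^{-}\Tal_{\lambda^{-}}\leq d_{\lambda}\cdot B(\lambda)$, where $B(\lambda)$ is the three-piece upper bound from \Cref{lem:one_summand-NC}, so the self-conjugate and non-self-conjugate cases can be treated uniformly.

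Write $\alpha:=(n-\tfrac{3}{2})(\log n+c)$ and $\beta:=\tfrac{1}{2}(n-1)(\log n+c)$. The cutoff-time guesses are calibrated so that $\tfrac{2\alpha}{2n-3}=\log n+c=\tfrac{2\beta}{n-1}$. Consequently, for $m:=n-\lambda_1\leq M$, Taylor expansion yields uniformly
\[
\left(\tfrac{2\lambda_1-3}{2n-3}\right)^{\lceil\alpha\rceil}=e^{o(1)-m(\log n+c)},\qquad \left(\tfrac{\lambda_1-1}{n-1}\right)^{2\lceil\beta\rceil}=e^{o(1)-m(\log n+c)},
\]
with the $o(1)$ coming from the second-order term $O(M^2\log n/n)$, which vanishes because $M$ is fixed. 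For $T\in\MT(\lambda)$ one has $c(b_T(n))=\lambda_1-1$ and $c(b_T(n))+c(b_T(n-1))=2\lambda_1-3$, so only the first sum in \Cref{lem:one_summand-NC} contributes and each summand is $o(e^{-2m(\log n+c)})$. Using $|\MT(\lambda)|\leq d_{\lambda}$ (halved for self-conjugate $\lambda$), $d_{\lambda}\leq\binom{n}{\lambda_1}d_{\xi}$ for $\xi\vdash m$ with $\xi_1\leq\lambda_1$, and $\sum_{\xi\vdash m}d_{\xi}^{2}=m!$, I obtain $\mathcal{T}_1(n)\leq\sum_{m=1}^{M}\frac{(e^{-2c})^{m}}{m!}\cdot o(1)$, which tends to $0$.

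For $T\in\ET(\lambda)$, estimate \eqref{eq:dimension-ineq} together with the fact that at most $M$ inner corners lie below the first row gives $|\ET(\lambda)|\leq\frac{M\cdot 4^{M}}{n}d_{\lambda}$, halved for self-conjugate $\lambda$. In each of the three cases of \Cref{lem:one_summand-NC}, every summand is bounded by a constant multiple of $e^{-2m(\log n+c)}$, using $|c(b_T(n))|,|c(b_T(n-1))|\leq\lambda_1-1$ and the same Taylor expansion. Summing yields $\mathcal{T}_2(n)\leq\frac{M\cdot 4^{M}}{n}\sum_{m=1}^{M}\frac{(e^{-2c})^{m}}{m!}\cdot O(1)$, which tends to $0$ since $M$ does not depend on $n$. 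The main obstacle is ensuring that the $o(1)$ correction in the Taylor expansion is uniform over $\lambda_1\in[n-M,n-1]$; this is exactly what the fixed choice $M=M(c,\varepsilon)$ from \Cref{lem:Error-term'} secures. The agreement of leading orders between $\alpha$ and $\beta$ is what makes the $\MT$-cancellation work, confirming that $\tfrac{1}{2}(n-1)\log n$ is the correct cutoff-time guess for $P'$.
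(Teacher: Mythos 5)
Your proposal follows the paper's proof essentially step for step: the same partition of $\Ustd\cup\Ustdc$ into $\MT(\lambda)$ and $\ET(\lambda)$, the same matched-exponent cancellation $\bigl(\tfrac{2\lambda_1-3}{2n-3}\bigr)^{\lceil\alpha\rceil}-\bigl(\tfrac{\lambda_1-1}{n-1}\bigr)^{2\lceil\beta\rceil}=o(1)\,e^{-(n-\lambda_1)(\log n+c)}$ on $\MT(\lambda)$, the same use of \eqref{eq:dimension-ineq} to get $|\ET(\lambda)|\leq\tfrac{M\cdot 4^{M}}{n}d_{\lambda}$, the same reduction of the self-conjugate case via $d_{\lambda}^{\pm}=d_{\lambda}/2$, and the same Poisson-tail summation, so the structure and conclusion are correct. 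One justification does need tightening: on $\ET(\lambda)$ you bound the $\bigl(\tfrac{c(b_T(n))+c(b_T(n-1))}{2n-3}\bigr)^{\lceil\alpha\rceil}$-type factors by invoking only $|c(b_T(n))|,|c(b_T(n-1))|\leq\lambda_1-1$, i.e.\ $|c(b_T(n))+c(b_T(n-1))|\leq 2\lambda_1-2$; with $m=n-\lambda_1$ this yields only $e^{-(m-\frac12)(\log n+c)}=\sqrt{n}\,e^{c/2}e^{-m(\log n+c)}$, and after squaring the extra factor $n$ exactly cancels the $\tfrac1n$ coming from $|\ET(\lambda)|$, so as written your estimate gives $\mathcal{T}_2(n)=O(1)$ rather than $o(1)$. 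The repair is immediate and is the bound you already used on $\MT(\lambda)$ (and which the paper records in \Cref{prop:Error-term} and \Cref{lem:Error-term'}): the boxes of $n$ and $n-1$ are distinct, so $c(b_T(n))+c(b_T(n-1))\leq 2\lambda_1-3$ (and here it is $\geq -2M$), whence $\bigl|\tfrac{c(b_T(n))+c(b_T(n-1))}{2n-3}\bigr|^{\lceil\alpha\rceil}\leq\bigl(1-\tfrac{2(n-\lambda_1)}{2n-3}\bigr)^{\alpha}\leq e^{-(n-\lambda_1)(\log n+c)}$, every $\ET$ summand is at most $4e^{-2(n-\lambda_1)(\log n+c)}$, and your stated bound $\mathcal{T}_2(n)\leq\tfrac{M\cdot 4^{M}}{n}\sum_{m=1}^{M}\tfrac{(e^{-2c})^{m}}{m!}\,O(1)\to 0$ is restored, exactly as in the paper.
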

    \begin{proof}
        For $\lambda\vdash n$, we recall the partition of $\Ustd\cup\Ustdc$ from the proof of \Cref{prop:Main-term}. More precisely, the set $\Ustd\cup\Ustdc$ is partitioned into two subsets as follows:
		\begin{align*}
			\MT(\lambda)&=\{T\in\Ustd\cup\Ustdc:\text{ both }n-1\text{ and }n\text{ are in the first row of }T\}.\\
			\ET(\lambda)&=\{T\in\Ustd\cup\Ustdc:\;n-1\text{ or }n\text{ is below the first row of }T\}\\
			&=\Ustd\cup\Ustdc\setminus\MT(\lambda).
		\end{align*}
        For the rest of this proof, we work with  $\lambda\in\Ncpar\cup\Cpar$ satisfying $n-M\leq\lambda_1<n$. From the proof of \Cref{prop:Main-term}, we recall the following:
        \begin{align}
            |\MT(\lambda)|&\leq\begin{cases}
				d_{\lambda}&\hspace*{-1.5ex}\text{ for non-self-conjugate }\lambda,\\
				\frac{d_{\lambda}}{2}&\hspace*{-1.5ex}\text{ for self-conjugate }\lambda,
			\end{cases}\label{eq:Main-term'1.1}\\
            |\ET(\lambda)|&\leq\begin{cases}
			\frac{M\times4^{n-\lambda_1}}{n}d_{\lambda}\leq \frac{M\times4^{M}}{n}d_{\lambda}&\text{ for non-self-conjugate }\lambda,\\
			\frac{M\times4^{n-\lambda_1}}{n}\frac{d_{\lambda}}{2}\leq \frac{M\times4^{M}}{n}\frac{d_{\lambda}}{2}&\text{ for self-conjugate }\lambda.
		\end{cases}\label{eq:Main-term'1.2}
        \end{align}
   		From \Cref{lem:one_summand-NC}, let us recall the notations $\alpha=\left(n-\frac{3}{2}\right)(\log n+c),\beta=\frac{1}{2}(n-1)(\log n+c)$, $a'_T=\frac{c(b_T(n))}{2n-3}, b'_T=\frac{c(b_T(n-1))}{2n-3},a_T=\frac{c(b_T(n))}{(n-1)}$, and $b_T=\frac{c(b_T(n-1))}{(n-1)}$. Therefore, using \Cref{lem:one_summand-NC}, for $\rho\in\{\lambda:\lambda\in\Ncpar\}\cup\{\lambda^+,\lambda^-:\lambda\in\Cpar\}$, we get that
        \begin{align}
        \Tal_{\rho}\leq&\hspace{-2ex}\sum_{\substack{T\in\Ustd\cup\Ustdc\\c(b_T(n))=c(b_T(n-1))+1}}\hspace{-2ex}\left(\left(a'_T+b'_T\right)^{\left\lceil\alpha\right\rceil}
        -\left(a_T\right)^{2\left\lceil\beta\right\rceil}\right)^2+\hspace{-4ex}
        \sum_{\substack{T\in\Ustd\cup\Ustdc\\c(b_T(n))=c(b_T(n-1))-1}}\hspace{-3ex}\left(\left(-a'_T-b'_T\right)^{\left\lceil\alpha\right\rceil}-\left(a_T\right)^{2\left\lceil\beta\right\rceil}\right)^2\nonumber\\
        &\quad\quad+\sum_{\substack{T\in\Ustd\cup\Ustdc\\c(b_T(n))\neq c(b_T(n-1))\pm1
        \\c(b_T(n-1))<c(b_T(n))}}\left((a_T)^{4\left\lceil\beta\right\rceil}+(b_T)^{4\left\lceil\beta\right\rceil}
        +2(a'_T+b'_T)^{2\left\lceil\alpha\right\rceil}\right) \nonumber\\
        \leq&\sum_{T\in\MT(\lambda)}\hspace{-2ex}\left(\left(a'_T+b'_T\right)^{\left\lceil\alpha\right\rceil}-\left(a_T\right)^{2\left\lceil\beta\right\rceil}\right)^2+\hspace{-4ex}\sum_{\substack{T\in\ET(\lambda)\\c(b_T(n))=c(b_T(n-1))\pm1}}\hspace{-4ex}\left(|a'_T+b'_T|^{\left\lceil\alpha\right\rceil}+|a_T|^{2\left\lceil\beta\right\rceil}\right)^2\label{eq:Main-term'2}\\
        &\quad\quad+\sum_{\substack{T\in\ET(\lambda)\\c(b_T(n))\neq c(b_T(n-1))\pm1\\c(b_T(n-1))<c(b_T(n))}}\left((a_T)^{4\left\lceil\beta\right\rceil}+(b_T)^{4\left\lceil\beta\right\rceil}+2(a'_T+b'_T)^{2\left\lceil\alpha\right\rceil}\right).\nonumber
        \end{align}
        The inequality in \eqref{eq:Main-term'2} follows from the triangle inequality and the fact that $\MT(\lambda)$ is a subset of 
        $\{T\in\Ustd\cup\Ustdc:c(b_{T}(n))=c(b_{T}(n-1))+1\}$. Thus, we have $a'_T+b'_T=\frac{2\lambda_1-3}{2n-3}$ and $a_T=\frac{\lambda_1-1}{n-1}$ for $T\in\MT(\lambda)$. Additionally, $|a'_T+b'_T|\leq \left|\frac{2\lambda_1-3}{2n-3}\right|\leq 1$ and $|a_T|\leq \left|\frac{\lambda_1-1}{n-1}\right|\leq 1$ for $T\in\ET(\lambda)$. Hence, we have the following:
        \begin{align}\label{eq:Main-term'3}
           (a'_T+b'_T)^{\lceil\alpha\rceil}-(a_T)^{2\lceil\beta\rceil}&=\left(\frac{2\lambda_1-3}{2n-3}\right)^{\lceil\left(n-\frac{3}{2}\right)(\log n+c)\rceil}-\left(\frac{\lambda_1-1}{n-1}\right)^{2\lceil\frac{1}{2}(n-1)(\log n+c)\rceil}\\
           &=e^{-(n-\lambda_1)(\log n+c)}o(1),\text{ for }T\in\MT(\lambda).\nonumber
        \end{align}
        Also, using $\alpha\leq\lceil\alpha\rceil$ and $\alpha\leq\lceil\beta\rceil$, we obtain the following:
        \begin{align}
            &|a'_T+b'_T|^{\lceil\alpha\rceil}\leq \left(\frac{2\lambda_1-3}{2n-3}\right)^{\lceil\left(n-\frac{3}{2}\right)(\log n+c)\rceil}\leq \left(\frac{2\lambda_1-3}{2n-3}\right)^{\left(n-\frac{3}{2}\right)(\log n+c)}\leq e^{-(n-\lambda_1)(\log n+c)},\label{eq:Main-term'4.1}\\
            &|a_T|^{2\lceil\beta\rceil},|b_T|^{2\lceil\beta\rceil}\leq \left(\frac{\lambda_1-1}{n-1}\right)^{2\lceil\frac{n-1}{2}(\log n+c)\rceil}\leq \left(\frac{\lambda_1-1}{n-1}\right)^{(n-1)(\log n+c)}\leq e^{-(n-\lambda_1)(\log n+c)},\label{eq:Main-term'4.2}
        \end{align}
        for $T\in\ET(\lambda)$. Therefore, using the estimates from \eqref{eq:Main-term'3}, \eqref{eq:Main-term'4.1}, and \eqref{eq:Main-term'4.2} in \eqref{eq:Main-term'2}, we get that
        \begin{align}
            \Tal_{\rho}\leq&\sum_{T\in\MT(\lambda)}\left(e^{-(n-\lambda_1)(\log n+c)}o(1)\right)^2+\sum_{\substack{T\in\ET(\lambda)\\c(b_T(n))=c(b_T(n-1))\pm1}}\left(2e^{-(n-\lambda_1)(\log n+c)}\right)^2\nonumber\\
            &\quad+\sum_{\substack{T\in\ET(\lambda)\\c(b_T(n))\neq c(b_T(n-1))\pm1\\c(b_T(n-1))<c(b_T(n))}}\left(e^{-2(n-\lambda_1)(\log n+c)}+e^{-2(n-\lambda_1)(\log n+c)}+2e^{-2(n-\lambda_1)(\log n+c)}\right)\nonumber\\
            \leq&\sum_{T\in\MT(\lambda)}e^{-2(n-\lambda_1)(\log n+c)}\left(o(1)\right)^2+\sum_{T\in\ET(\lambda)}\left(2e^{-(n-\lambda_1)(\log n+c)}\right)^2
            \label{eq:Main-term'5.1}\\
            \leq&\begin{cases}
                \left(\left(o(1)\right)^2+\frac{M\times 4^{M+1}}{n}\right)d_{\lambda}e^{-2(n-\lambda_1)(\log n+c)}&\text{ if }\rho=D_{\lambda},\lambda\text{ is non-self-conjugate},\\
                \left(\left(o(1)\right)^2+\frac{M\times 4^{M+1}}{n}\right)\frac{d_{\lambda}}{2}e^{-2(n-\lambda_1)(\log n+c)}&\text{ if }\rho=D^{\pm}_{\lambda},\lambda\text{ is self-conjugate}.
            \end{cases}\label{eq:Main-term'5.2}
        \end{align}
        The inequality in \eqref{eq:Main-term'5.1} follows from the following facts:
        \begin{itemize}
            \item $\ET(\lambda)$ is a disjoint union of $\{T\in\ET(\lambda):c(b_T(n))= c(b_T(n-1))\pm1\}$ and $\{T\in\ET(\lambda):c(b_T(n))\neq c(b_T(n-1))\pm1\}$.
            \item The set $\Big\{T\in\ET(\lambda):c(b_T(n))\neq c(b_T(n-1))\pm1\Big\}$ contains the set  $\\\Big\{T\in\ET(\lambda):c(b_T(n))\neq c(b_T(n-1))\pm1,\;c(b_T(n-1))< c(b_T(n)) \Big\}$.
        \end{itemize}
        The inequality in \eqref{eq:Main-term'5.2} follows from the bounds in \eqref{eq:Main-term'1.1} and \eqref{eq:Main-term'1.2}, which provide estimates for the number of elements in $\MT(\lambda)$ and $\ET(\lambda)$. Now, using the estimates obtained in \eqref{eq:Main-term'5.2} and the fact that $d_{\lambda}^+=d_{\lambda}^-=\frac{d_{\lambda}}{2}$, we get that
        \begin{align}
            \mathscr{S}um_2(n)&\leq \sum_{\substack{\lambda\in\Ncpar\\n-M\leq\lambda_1<n}}d_{\lambda}^2\left(\left(o(1)\right)^2+\frac{M\times 4^{M+1}}{n}\right)e^{-2(n-\lambda_1)(\log n+c)}\nonumber\\
            &\quad+2\sum_{\substack{\lambda\in\Cpar\\n-M\leq\lambda_1<n}}\left(\frac{d_{\lambda}}{2}\right)^2\left(\left(o(1)\right)^2+\frac{M\times 4^{M+1}}{n}\right)e^{-2(n-\lambda_1)(\log n+c)}\nonumber\\
            &<\sum_{\lambda\vdash n\;:\;n-M\leq\lambda_1<n}d_{\lambda}^2\left(\left(o(1)\right)^2+\frac{M\times 4^{M+1}}{n}\right)e^{-2(n-\lambda_1)(\log n+c)}.\label{eq:Main-term'6}
        \end{align}
        The inequality in \eqref{eq:Main-term'6} follows from the fact that: $\frac{d^2_{\lambda}}{2}<d_{\lambda}^2$, and
        \[\sum_{\substack{\lambda\notin\Ncpar\cup\Cpar\\n-M\leq\lambda_1<n}}d_{\lambda}^2\left(\left(o(1)\right)^2+\frac{M\times 4^{M+1}}{n}\right)e^{-2(n-\lambda_1)(\log n+c)}\geq 0.\]
        We now perform similar calculations to those done in \eqref{eq:Main-term3}-\eqref{eq:Main-term4} but replacing $(o(1))^2$ in place of $(e^{o(1)}-1)^2$. This gives the following inequality:
        \begin{equation}\label{eq:Main-term'7}
            \sum_{\lambda\vdash n\;:\;n-M\leq\lambda_1<n}d_{\lambda}^2e^{-2(n-\lambda_1)(\log n+c)}\left(o(1)\right)^2<\left(e^{e^{-2c}}-1\right)\left(o(1)\right)^2.
        \end{equation}
        Next, we perform similar calculations to those in \eqref{eq:Main-term6}-\eqref{eq:Main-term7}, but replacing $(e^{o(1)}+1)^2$ with $4$, and obtain:
        \begin{equation}\label{eq:Main-term'8}
            \sum_{\lambda\vdash n\;:\;n-M\leq\lambda_1<n}\frac{M\times 4^M}{n}\;d_{\lambda}^2e^{-2(n-\lambda_1)(\log n+c)}\times 4<\frac{M\times 4^M}{n}\left(e^{e^{-2c}}-1\right)\times 4.
        \end{equation}
        Finally, combining the inequalities from \eqref{eq:Main-term'6},\eqref{eq:Main-term'7}, and \eqref{eq:Main-term'8}, we arrive at the following:
        \[0\leq\mathscr{S}um_2(n)<\left(e^{e^{-2c}}-1\right)\left((o(1))^2+\frac{M\times 4^{M+1}}{n}\right).\]
        Thus, the lemma follows from the fact
        $\left(e^{e^{-2c}}-1\right)\left((o(1))^2+\frac{M\times 4^{M+1}}{n}\right)\rightarrow 0$ as $n\rightarrow\infty$.
    \end{proof}
    Following the notations of \Cref{lem:Main-term'}, we note down an immediate corollary as follows:
    \begin{cor}\label{cor:Main-term'}
    There exists a large enough positive integer $\overline{N}=\overline{N}(c,\varepsilon,M)>0$ such that for all $n\geq \overline{N}$, we have $\mathscr{S}um_2(n)<\frac{\varepsilon}{2}$.
    \end{cor}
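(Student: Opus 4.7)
The statement is labelled as an immediate corollary of \Cref{lem:Main-term'}, so the plan is simply to unpack the definition of limit. By \Cref{lem:Main-term'}, we have $\lim_{n\to\infty}\mathscr{S}um_2(n)=0$, where the constant $M$ and the decomposition of $\mathscr{S}um_2(n)$ depend on $c$ and $\varepsilon$ but not on $n$. Since $\mathscr{S}um_2(n)$ is a sum of terms of the form $d_{\rho}\Tal_{\rho}=d_{\rho}\Tr\bigl(\widehat{P}(\rho)^{\lceil\alpha\rceil}-\widehat{P'}(\rho)^{\lceil\beta\rceil}\bigr)^2$, and since the random walks driven by $P$ and $P'$ are reversible (so the Fourier transforms of the symmetric measures $P^{*\lceil\alpha\rceil}-P'^{*\lceil\beta\rceil}$ are self-adjoint in a unitary model of $\rho$), each trace is nonnegative; alternatively, one can simply observe from \Cref{lem:one_summand-NC} that $\Tal_{\rho}$ is bounded above by a manifestly nonnegative expression, and the proof of \Cref{lem:Main-term'} actually produces the upper bound $0\leq \mathscr{S}um_2(n)<(e^{e^{-2c}}-1)\bigl((o(1))^2+\tfrac{M\cdot 4^{M+1}}{n}\bigr)$.

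Given this, the plan is to apply the $\varepsilon/2$ criterion directly. The quantity $(e^{e^{-2c}}-1)\bigl((o(1))^2+\tfrac{M\cdot 4^{M+1}}{n}\bigr)$ tends to $0$ as $n\to\infty$ (with $c,\varepsilon,M$ fixed), so one can choose $\overline{N}=\overline{N}(c,\varepsilon,M)$ large enough that this upper bound is strictly less than $\varepsilon/2$ for all $n\geq\overline{N}$. The conclusion $\mathscr{S}um_2(n)<\varepsilon/2$ then follows. There is no substantive obstacle here, as the entire analytical content already lives in \Cref{lem:Main-term'} (and indirectly in \Cref{lem:Error-term'} and \Cref{lem:one_summand-NC}); the corollary merely converts the limit statement into the quantitative form needed for the subsequent comparison via \Cref{thm:comparison} when summed with \Cref{lem:Error-term'} to bound $\mathscr{S}um(n)=\mathscr{S}um_1(n)+\mathscr{S}um_2(n)<\varepsilon$.
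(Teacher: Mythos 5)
Your proposal is correct and matches the paper's (implicit) argument: the corollary is exactly the quantitative unpacking of the limit $\lim_{n\to\infty}\mathscr{S}um_2(n)=0$ from \Cref{lem:Main-term'}, using the bound $0\leq\mathscr{S}um_2(n)<\left(e^{e^{-2c}}-1\right)\left((o(1))^2+\frac{M\times 4^{M+1}}{n}\right)$ with $M$ independent of $n$. Nothing further is needed.
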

   Using \Cref{lem:Error-term'} and \Cref{cor:Main-term'}, we obtain that $\mathscr{S}um(n)=\mathscr{S}um_1(n)+\mathscr{S}um_2(n)<\varepsilon$ for all $n\geq \max\{N,\overline{N}\}$, where $\mathscr{S}um(n)$ is given in \eqref{eq:P-P'-comp}. Since $\varepsilon>0$ is arbitrary, we conclude that $\displaystyle\lim_{n\rightarrow\infty}\mathscr{S}um(n)=0$. Therefore, using \Cref{thm:comparison} and \Cref{main-thm}, we conclude that:
    \[\lim_{n\rightarrow\infty}\left\|(P')^{*\lceil\frac{1}{2}(n-1)\log n+cn\rceil}-U_{A_n}\right\|_{\text{TV}}=d_{\text{TV}}\left(\text{Poi}(1+e^{-c}),\text{Poi}(1)\right),\;c\in\mathbb{R}.\]
\begin{rem}
   Although Nestoridi's comparison method fails to compare the random walks on $A_n$ driven by $P'$ and $P$, it applies nicely to the comparison between the random walks on $A_n$ driven by $P'$ and $Q$. The proof technique for comparing these latter walks uses symmetric group character estimates and proceeds similarly to the argument presented in \Cref{sec:main-proof}. Here, we directly compare the random walks on $A_n$ driven by $P'$ and $P$ to illustrate our comparison method (\Cref{thm:comparison}) for random walks with a non-commuting transition matrix.
\end{rem}
\subsection*{Acknowledgement} We sincerely thank the anonymous referees for their valuable comments, which have significantly improved the quality of this article. We also extend our gratitude to Evita Nestoridi and Raghavendra Tripathi for their insightful feedback on the previous version of the manuscript. SG acknowledges the support of the INSPIRE FACULTY FELLOW research grant (IFA 23 MA 198).	
\bibliography{TT2R_L-P_ref}{}
	\bibliographystyle{plain}
\end{document}